\newtheorem{theorem}{Theorem}[section]
\newtheorem{lemma}[theorem]{Lemma}
\newtheorem{proposition}[theorem]{Proposition}
\newtheorem{corollary}[theorem]{Corollary}
\theoremstyle{definition}
\newtheorem{definition}[theorem]{Definition}
\newtheorem{example}[theorem]{Example}
\theoremstyle{remark}
\newtheorem{remark}[theorem]{Remark}
\numberwithin{equation}{section}
\begin{document}
\setcounter{page}{1}

\title[NORM-ATTAINABILITY IN NORMED SPACES]{VARIOUS NOTIONS OF NORM-ATTAINABILITY IN NORMED SPACES}
\author[N. B. Okelo]{N. B. Okelo}

\address{Department of Pure and Applied Mathematics\\ School of Mathematics and Actuarial Science\\ Jaramogi Oginga Odinga  University of Science and Technology\\ Box 210-40601, Bondo-Kenya.}
\email{bnyaare@yahoo.com}

\subjclass[2010]{Primary 47B47; Secondary 47A30.}


\date{Received: xxxxxx; Revised: yyyyyy; Accepted: zzzzzz.
\newline \indent $^{*}$ Corresponding author}

\begin{abstract}
Let $H$ be a reflexive, dense, separable, infinite dimensional complex Hilbert space and let $B(H)$ be the algebra of all bounded linear operators on $H$. In this paper, we carry out characterizations of norm-attainable operators in normed spaces. We give conditions for norm-attainability of linear functionals in Banach spaces, non-power operators on $H$ and elementary operators. Lastly, we characterize a new notion of norm-attainability for power operators in normed spaces.
\end{abstract} \maketitle

\section{Introduction}
Studies on Hilbert space operators have elicited a lot of interest from mathematicians for decades. Characterizations of properties of Hilbert space operators have been done by many mathematicians  over a long period of time with interesting results being obtained. Such properties include norms, numerical ranges, positivity, spectrum, invertibility among others. Norm-attainability is also a property which has been give keen attention. This property still remains very important as it has a lot of open questions which are unanswered particulary when a super class of Hilbert space operators called supraposinormal operators \cite{Oke3} are considered. In \cite{Du} the authors characterized the norm property for elementary operators and gave conditions under which a general elementary operator is norm-attainable. In-depth characterization of norm-attainable operators has also been done in details with considerations given to other properties like orthogonality (see \cite{Oke1} - \cite{Oke5} and the references therein). Regarding derivations, authors in \cite{Oke5}  showed that if  $V_{\Gamma}$ and $W_{\Gamma}$ are $\Gamma$-Banach algebras and $\delta$  an
$\alpha$-inner derivation, then $\delta$ is norm-attainable if and only if the adjoint, $\delta^{\ast}$, of $\delta$ is norm-attainable. Moreover, as a consequence they proved that if $\delta_{N}^{1}$ and $\delta_{N}^{2}$ are norm-attainable  then
$\delta_{N}$ is norm attainable if either $\delta_{N}^{1}$ and $\delta_{N}^{2}$ or both are zero derivations
  and $\delta_{N}^{1},$ and $\delta_{N}^{2}$ are $\alpha$-inner derivation and
 $\alpha^{\prime}$-inner derivation respectively. On elementary operators,  necessary and sufficient conditions for norm-attainability for Hilbert space operators were given in \cite{Oke2} where it was proved that if $S\in B(H),\;\beta\in W_{0}(S)$ and $\alpha >0,$ then there exists an operator $Z\in B(H) $ such that $\|S\|=\|Z\|,$ with $\|S-Z\|<\alpha$. Furthermore, there exists a unit vector $\eta\in H$ such that $\|Z\eta\|=\|Z\|$ with $\langle Z\eta,\eta\rangle =\beta,$ where $W_{0}(S)$ denotes the maximal numerical range of the operator $S.$ Moreover, norm-attainability conditions for elementary operators and generalized derivations have been given. For orthogonality of elementary operators in norm-attainable classes, a detailed exposition has been given in \cite{Oke4} and \cite{Oke6}. A superclass of Hilbert space operators have also been considered in norm-attainable classes. In \cite{Oke3}, a good characterization has been done on $\alpha$-supraposinormality of operators in dense norm-attainable classes. In this paper,  we continue in the spirit of characterization of operators in normed spaces. We characterize norm-attainability for functionals in Banach spaces. Moreover, we give a new notion of norm-attainability for power operators and also for elementary operators which generalizes the results of \cite{Oke2}.

\section{Preliminaries}
In this section, we outline preliminary concepts which are useful in the sequel.
\begin{definition}(\cite{Kry})
Let $V$ be a linear vector space. A non-negative real valued function $\|.\|: V \rightarrow \mathbb{R}$
is called a norm on $V$ if  it satisfies the following condtions:
\begin{itemize}
  \item [(i).] $\|x\|\geq 0$ and $\|x\|= 0,$ if and only if $x=0,$ for all $x\in V.$
  \item [(ii).] $\|\alpha x\|=|\alpha|\|x\|,$ for all $x\in V$ and $\alpha\in \mathbb{K}.$
  \item [(iii).] $\|x+y\|\leq\|x\|+\|y\|,$ for all $x, y\in V.$
\end{itemize}
The ordered pair $(V, \|.\|)$ is called a normed linear space or simply a normed space.
\end{definition}

\begin{remark}
Examples of normed spaces are:
Banach space, Hilbert space, Hardy space, Orlicz space, $B(H)$ among others.
\end{remark}
\begin{definition}(\cite{Kry})
An operator $A \in B(H)$ is called a scalar operator
of order $m$ if it possesses a spectral distribution of order $m$, i.e., if there is a
continuous unital morphism $\phi : C^{m}
_{0} (\mathbb{C})\rightarrow B(H)$ such that $\phi(z) = A,$ where
$z$ stands for the identity function on $\mathcal{C}$ and $C^{m}
_{0} (\mathbb{C})$ for the space of compactly
supported functions on $\mathbb{C}$ continuously differentiable of order $m$, $0 \leq m \leq \infty.$
An operator $A_{0} \in B(H)$ is called subscalar if it is similar to the restriction of a
scalar operator to an invariant subspace.
\end{definition}

\begin{definition}(\cite{Oke1})
An operator $A\in B(H)$ is said to be
normal if $AA^{*}=A^{*}A$ and $p$-normal if $A^{p}A^{*}=A^{*}A^{p}$; self-adjoint if $A=A^{*}$; positive if $A=A^{*}$ and
$\langle Ax, x\rangle \geq 0,$ for all $x \in H$; and projection if $A^{2}=A=A^{*}.$
\end{definition}
\begin{definition}\label{NA}(\cite{Oke2}, Definition 1.1) An operator  $A\in B(H)$ is said to be norm-attainable
if there exists a unit vector $x_{0}\in H$ such that $\|Ax_{0}\|=\|A\|.$ The set of all norm-attainable operators on a Hilbert space $H$ is denoted by $NA(H).$
\end{definition}

\begin{definition} (\cite{Oke2}, Definition 1.2) For an operator  $A\in B(H)$ we define a numerical range by $W(A)=\{\langle Ax, x\rangle:\; x\in H,\; \|x\|=1\}$ and the maximal numerical range by $W_{0}(A)=\{\beta\in \mathbb{C}:\; \langle Ax_{n},x_{n}\rangle \rightarrow\beta,\; \text{where}\;\|x_{n}\|=1,\; \|Ax_{n}\|\rightarrow \|A\|\}.$
\end{definition}

\begin{definition}(\cite{Dun}) Let $\Omega$ be a Banach lattice then $\Omega$ is an abstract $M$ space i.e. $\Omega\in AM$ if $x\wedge y=0$ implies $\|x+y\|=\max\{\|x\|,\; \|y\|\}$. Also $\Omega$ is abstract $L$ space i.e. $\Omega\in AL$ if $x\vee y=0$ implies $\|x+y\|=\|x\|+\|y\|$.
\end{definition}
\begin{definition} (\cite{Kry})Let $\Omega$ be a Banach lattice. Then
 $\Omega\in AM$ implies $\Omega^{*}\in AL$ and $\Omega\in AL$ implies $\Omega^{*}\in AM$.
   $\Omega\in AM$ if and only if for any  $x,\;y \in\Omega,x,y\geq0\; $ implies $\|x\vee y\|=\max\{\|x\|,\;\|y\|\}$.
   $\Omega\in AL$ if and only if for any  $x,\;y\in \Omega,x,y\geq0\; $ implies $\|x+y\|=\|x\|+\|y\|$.
\end{definition}
\begin{definition}(\cite{Dun})
Let $\Omega$ be a Banach lattice. Then
$\Omega$ is said to be $\sigma$ complete, if for every order bounded sequence $\{x_{n}\}\in \Omega, \vee_{n\geq1}x_{n}$ exists in $\Omega$.
  Also  $\Omega$ is said to be bounded $\sigma$ complete, provided that the any norm bounded and order monotone sequence in $\Omega$ is order convergent.
\end{definition}
\begin{definition} (\cite{Dun})Let $\Omega$ be a Banach space. An element $x\in D(\Omega)$ is called an extreme point of $B(\Omega)$ if $x=\lambda y+(1-\lambda)z,\; y,\; z\in B(\Omega)$ and $\lambda \in (0,\;1)$, imply $y=z$. In this case, we write $x\in ext B(\Omega)$.
\end{definition}
\begin{definition}\label{ELEM}(\cite{Oke1})
Let $\mathcal{A}$  be a Banach algebra and consider
$T:\mathcal{A}\rightarrow \mathcal{A}.$ The operator $T$ is called an
elementary operator if it has the
representation
 $ T(X)=\sum_{i=1}^{n}A_{i}XB_{i},\;\forall\;X\in \mathcal{A},$
 where $A_{i},\;B_{i}$ are fixed in $\mathcal{A}$ or
$\mathcal{M}(\mathcal{A}),$ where $\mathcal{M}(\mathcal{A})$ is the
multiplier algebra of $\mathcal{A}.$
\end{definition}
\begin{example}
Let  $\mathcal{A}=B(H)$. For
$A,\,B\in B(H)$ we define the particular elementary operators:
\begin{enumerate}
 \item [(i).] the left multiplication operator $L_{A}:B(H)\rightarrow B(H)$ by $L_{A}(X)=AX,$ for all $X\in B(H).$
 \item [(ii).] the right multiplication operator $R_{B}:B(H)\rightarrow B(H)$ by $R_{B}(X)=XB,$ for all $X\in B(H).$
 \item [(iii).] the generalized derivation (implemented by $A,\;B$) by $\delta_{A,B}=L_{A}-R_{B},$ for all $X\in B(H).$
 \item [(iv).] the basic elementary  operator (implemented by $A,\;B$) by $M_{A,\;B}(X)=AXB,$ for all $X\in B(H).$
 \item [(v).] the Jordan elementary  operator (implemented by $A,\;B$) by $\mathcal{U}_{A,\;B}(X)=AXB+BXA,$ for all $X\in B(H).$
\end{enumerate}
\end{example}
\section{Norm-attainability for functionals}
In this section, we characterize norm-attainability of functionals in Banach spaces. We regard $H^{*}$ the dual space of a Hilbert space $H$ to be non-zero throughout this section unless otherwise stated. Let $\varphi \in H^{*}$. Then $\varphi$  is said to be norm-attainable at $\frac{\varphi}{\|\varphi\|}$ if there exists  $T\in B(H)$ such that $\langle \varphi , T\rangle =\|\varphi\|.\|T\|>0.$ $\frac{\varphi^{*}}{\|\varphi^{*}\|}$ is called a support for $\varphi.$
The following proposition shows that any functional is norm-attainable in non-zero dual spaces.
\begin{proposition}\label{P1}
Let $B(W)$ be the set of all bounded linear maps on an Orlicz space $W$ then every $\varphi\in H_{+}^{*}$ is norm-attainable on $B(W)$.
\end{proposition}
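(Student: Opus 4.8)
The plan is to reduce the statement to the familiar principle that in a reflexive space every bounded linear functional attains its norm, and then to realize that attainment concretely inside $B(W)$. First I would apply the Riesz representation theorem on the Hilbert space $H$: for $\varphi\in H^{*}$ there is a unique $y_{\varphi}\in H$ with $\varphi(x)=\langle x,y_{\varphi}\rangle$ for all $x\in H$ and $\|y_{\varphi}\|=\|\varphi\|$. Putting $x_{0}=y_{\varphi}/\|y_{\varphi}\|$ (legitimate since the functionals under consideration are non-zero, $H^{*}\neq\{0\}$) yields $\varphi(x_{0})=\|\varphi\|=\|\varphi\|\,\|x_{0}\|>0$, so the support element $\varphi^{*}/\|\varphi^{*}\|$ from the paragraph before the proposition genuinely exists. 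For $\varphi$ in the positive cone $H_{+}^{*}$ one observes in addition that the norming vector $x_{0}$ may be chosen in the positive part, which keeps the bracket $\langle\varphi,x_{0}\rangle$ strictly positive.

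Next I would transfer this to the Orlicz space. Since $W$ is an Orlicz space, $B(W)$ is a normed algebra containing every finite-rank map, and I would build the rank-one operator $T=f\otimes w$ determined by a norming pair $(w,f)\in W\times W^{*}$, i.e. $\|w\|=\|f\|=1$ and $f(w)=1$; existence of such a pair follows from the Hahn--Banach theorem together with reflexivity of $W$ (an Orlicz space satisfying the $\Delta_{2}$-condition is reflexive, so norming functionals are themselves attained). One then checks $\|T\|=1$ and, under the pairing used in the definition preceding the proposition, computes $\langle\varphi,T\rangle=\|\varphi\|=\|\varphi\|\,\|T\|>0$, which is exactly the defining condition for $\varphi$ to be norm-attainable on $B(W)$. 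This closes the argument.

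The main obstacle is the interpretation and handling of the bracket $\langle\varphi,T\rangle$, which pairs a functional on $H$ with an operator on $W$: one must pin down precisely which duality is intended (trace pairing against the rank-one $T$, equivalently $\varphi$ reading off the norming constant of $T$) and then verify rigorously that the chosen $T$ lies in $B(W)$ with $\|T\|=1$. Establishing $\|T\|=1$ is where reflexivity is used a second time, via attainment of the norm on both $W$ and $W^{*}$; the remaining points --- strict positivity of the bracket and the inequality $\|\varphi\|\,\|T\|>0$ (which holds because the ambient dual is non-zero) --- are routine and require no further hypotheses.
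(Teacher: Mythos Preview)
Your argument does not engage with the hypothesis $\varphi\in H_{+}^{*}$ at all: every step you outline (Riesz representation, reflexivity, the rank-one $T$) would apply verbatim to an arbitrary $\varphi\in H^{*}$. That alone should raise a flag, because immediately after this proposition the paper gives a nontrivial \emph{characterisation} (Theorem~\ref{P2}) of which functionals are norm-attainable, and later shows such functionals are merely \emph{dense} (Theorem~\ref{THMM2}). If your reflexivity route were valid, those results would be vacuous. The reflexivity you invoke is also unjustified: no $\Delta_{2}$-condition is assumed on $W$, and in the regime the paper cares about (singular functionals, $\varphi(W_{0})=\{0\}$) the ambient space is typically \emph{not} reflexive.

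The paper's proof is order-theoretic and uses positivity in an essential way. One picks $J_{n}$ in the unit ball with $\varphi(J_{n})>\|\varphi\|-b_{n}$; singularity of $\varphi$ allows the Orlicz modulars $\pi(J_{n})$ to be taken so small ($\pi(J_{n})\le 2^{-n}$) that the pointwise supremum $J=\sup_{n}|J_{n}|$ still satisfies $\pi(J)\le\sum_{n}\pi(J_{n})\le 1$, hence $J\in B(W)$. Positivity of $\varphi$ then gives $\varphi(J)\ge\sup_{n}\varphi(|J_{n}|)=\|\varphi\|$, which is the attainment. The two ingredients you discard --- positivity (to pass to the supremum) and singularity (to control the modular of the supremum) --- are exactly what makes the argument go through; Riesz representation and rank-one operators play no role.
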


\begin{proof}
Let $W$ be a bounded linear Orlicz space and $b_{n}\in W$ such that $b_{n}$ is monotone decreasing to $0$. Given $J_{n}\in B(W)$ we have $\varphi(J_{n})> \|\varphi\|-b_{n}$ since $\pi(J_{n})\leq 1<\infty$ and $\varphi(W_{0})=\{0\}$. Suppose that $\pi (J_{n})\leq 2^{-n}$ and let $J(t)=\sup|J_{n}(t)|$. Then $\pi(J)\leq \sum_{n=1}^{\infty}\pi(J_{n})\leq 1,$ that is, $J\in B(W)$ and $\varphi(J)\geq \sup_{n}(|J_{n}|)=\|\varphi\|,$ because $\varphi\in H^{*}_{+}$.
\end{proof}

\begin{theorem}\label{P2}
Let $B(W)$ be the set of all bounded linear maps on an Orlicz space $W$ then $\varphi\in H^{*}$ is norm-attainable on $B(W)$ if and only there exists $L$ in a subspace $\mathcal{C}$  of $B(W)$ such that $\varphi^{+}=\varphi|_{L}$ and $\varphi^{-}=\varphi|_{G \setminus L}$.
\end{theorem}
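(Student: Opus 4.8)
The plan is to reduce the general statement to the positive case already settled in Proposition \ref{P1} by passing to the Hahn--Jordan decomposition $\varphi = \varphi^{+} - \varphi^{-}$ of $\varphi$ into positive functionals, and then to translate that analytic splitting into the geometric one, namely a subspace $L$ together with its relative complement $G \setminus L$ inside a suitable $\mathcal{C} \subseteq B(W)$. Throughout I would use that the dual in play is a Banach lattice of type $AL$ (via the $AM$--$AL$ duality recalled among the preliminaries), so that $\|\varphi\| = \|\varphi^{+}\| + \|\varphi^{-}\|$.

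For the converse direction, assume such an $L$ and $\mathcal{C}$ are given, so that $\varphi^{+} = \varphi|_{L}$ and $\varphi^{-} = \varphi|_{G \setminus L}$ are both positive functionals on $B(W)$. Proposition \ref{P1} then produces $T^{+}, T^{-} \in B(W)$ with $\langle \varphi^{+}, T^{+} \rangle = \|\varphi^{+}\|\,\|T^{+}\|$ and $\langle \varphi^{-}, T^{-} \rangle = \|\varphi^{-}\|\,\|T^{-}\|$, and after rescaling we may take $\|T^{+}\| = \|T^{-}\| = 1$. Because $L$ and $G \setminus L$ are disjoint, the operators $T^{+}$ and $T^{-}$ may be chosen with disjoint supports, whence $T := T^{+} - T^{-}$ satisfies $\|T\| = \max\{\|T^{+}\|, \|T^{-}\|\} = 1$ by the $AM$-property, while $\langle \varphi, T \rangle = \langle \varphi^{+}, T^{+} \rangle + \langle \varphi^{-}, T^{-} \rangle = \|\varphi^{+}\| + \|\varphi^{-}\| = \|\varphi\|$. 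Hence $\langle \varphi, T \rangle = \|\varphi\|\,\|T\| > 0$ and $\varphi$ is norm-attainable on $B(W)$.

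For the forward direction, suppose $\varphi$ is norm-attainable on $B(W)$, so there is $T \in B(W)$ with $\langle \varphi, T \rangle = \|\varphi\|\,\|T\| > 0$. Taking the Jordan decomposition $\varphi = \varphi^{+} - \varphi^{-}$ and normalizing, I would let $L$ be the carrier of the positive part --- the set on which the suprema defining $\varphi^{+}$ are attained by the relevant pieces of $T$ --- and take $\mathcal{C}$ to be the closed linear span of $L \cup (G \setminus L)$. The equality $\langle \varphi, T \rangle = \|\varphi\|\,\|T\|$ together with $\|\varphi\| = \|\varphi^{+}\| + \|\varphi^{-}\|$ forces the two contributions $\langle \varphi^{+}, T \rangle$ and $-\langle \varphi^{-}, T \rangle$ to attain their respective suprema simultaneously, which is precisely the assertion $\varphi^{+} = \varphi|_{L}$ and $\varphi^{-} = \varphi|_{G \setminus L}$.

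The step I expect to be the main obstacle is this forward direction: extracting from a single norm-attaining $T$ the honest subspace $L$ on which $\varphi$ restricts to $\varphi^{+}$, and verifying that the decomposition is genuinely into a subspace and its relative complement rather than a purely abstract lattice band decomposition. This is where I would invoke the bounded $\sigma$-completeness of $W$ together with a maximal-numerical-range-type argument (in the spirit of the $W_{0}$ machinery from the preliminaries) to guarantee that the suprema defining $\varphi^{\pm}$ localize on disjoint pieces; once that bookkeeping is in place, the norm identities are immediate from the $AL$/$AM$ structure, as in the converse.
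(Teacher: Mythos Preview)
Your converse direction is essentially the paper's: apply Proposition~\ref{P1} to each of $\varphi^{+}$ and $\varphi^{-}$, build a single element supported on the two disjoint pieces $L$ and $G\setminus L$, and read off $\varphi(T)=\|\varphi^{+}\|+\|\varphi^{-}\|=\|\varphi\|$. The paper does this through the Orlicz modular $\pi$ (taking $\pi(P|_{L})\le\tfrac12$ and $\pi(Q|_{G\setminus L})\le\tfrac12$ so that their difference lies in the unit ball) rather than via an abstract $AM$-norm identity, but the mechanism is the same.

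The forward direction, however, has a genuine gap. You describe $L$ only as ``the carrier of the positive part --- the set on which the suprema defining $\varphi^{+}$ are attained,'' and you appeal to bounded $\sigma$-completeness and $W_{0}$-type maximal numerical range arguments to localize $\varphi^{\pm}$ on disjoint pieces. None of that is what is actually needed (and the $W_{0}$ machinery from the preliminaries is entirely unrelated to this section). The paper's move is concrete and elementary: given $P\in B(W)$ with $\varphi(P)=\|\varphi\|$, set
\[
L=\{\,l\in G:\ P(l)\ge 0\,\},
\]
i.e.\ define $L$ from the \emph{sign} of the attaining element, not from $\varphi$ itself. One then shows $\varphi|_{L}\ge 0$ (hence $\varphi|_{L}=\varphi^{+}$) by contradiction: if $\varphi|_{L}(Q)<0$ for some $Q\ge 0$, replace $P$ on $L$ by $-Q$ to form $J=P|_{G\setminus L}-Q|_{L}\in B(W)$; a short chain of inequalities then gives $\|\varphi^{-}\|>\varphi(P|_{G\setminus L})$ and hence $\|\varphi\|=\|\varphi^{+}\|+\|\varphi^{-}\|>\varphi(P)=\|\varphi\|$, which is absurd. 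The argument for $-\varphi|_{G\setminus L}\ge 0$ is symmetric. Your sketch never produces this candidate $L$, and the assertion that ``the equality forces the two contributions to attain their suprema simultaneously'' does not by itself yield a \emph{set} on which the restrictions are the Jordan parts; that is precisely the content supplied by the sign-set construction plus the contradiction step.
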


\begin{proof}
 It is known that we have  $P, Q\in B(W)$ such that $\varphi|_{L}(P)\; \varphi^{*}\;\|\varphi^{+}\|$ and $ \varphi|_{G\setminus L}(Q)=\|\varphi^{-}\|$ by Proposition \ref{P1}. Assume  that $\pi(P|_{L})\leq \frac{1}{2}$ and $\pi(Q|_{G\setminus L})\leq \frac{1}{2}$.  Then $\pi(P|_{L}-Q|_{G\setminus L})\leq \pi(P)+\pi(Q)\leq 1.$ Indeed, $P|_{L}-Q|_{G\setminus L}\in L(W)$ and $\varphi(P|_{L}-y|_{G\setminus L})=\|\varphi^{+}\|+\|\varphi^{-}\|=\|\varphi\|$. Suppose that $P\in B(W)$ satisfies $\varphi(P)=\|\varphi\|$. Let $L=\{\l\in G:P(l)\geq 0\}$ then we show that $\varphi|_{L}, \; -\varphi|_{G\setminus L}\in H^{*}_{+}$. Now, if $\varphi|_{L}\in H^{*}_{+}$, then there exists $Q\in W^{+}$. If $\varphi|_{L\notin H^{'}}$ then there exists $Q\in W^{+}$ such that $\varphi|_{L}(Q)<0$. Now $\varphi$ being singular,  assume $\pi(Q)\leq \frac{1}{2}$ and $\pi(P)\leq \frac{1}{2}$. Then $J=P|_{G \setminus L}-Q|_{L}\in B(W)$ and so, $\|\varphi^{-}\|\geq \varphi^{-}(-J)=-\varphi^{+}(J)+\varphi(J)\geq \varphi(J)=\varphi(P|_{G\setminus L}-\varphi|_{L}(Q))>\varphi(P|_{G\setminus L})$. This is contrary to our earlier assumption. Lastly, $\|\varphi\|= \|\varphi^{+}\|+  \|\varphi^{-}\|>\varphi|(P|_{L})+\varphi(P|_{G\setminus L})= \varphi|(P)= \|\varphi\|.$  $(\varphi |_{G\setminus L})\in H_{1}^{*}$ follows analogously.
\end{proof}

\begin{corollary}
Let $\varphi\in H^{*}$ is norm-attainable at $P\in B(W)$, then $\varphi(P|_{A})\; \|\varphi|_{A}\|$ for all $A\in \mathcal{C}$.
\end{corollary}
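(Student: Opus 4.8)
The plan is to derive the corollary as a direct consequence of Theorem \ref{P2} together with the defining property of norm-attainability for a functional. Suppose $\varphi \in H^{*}$ is norm-attainable at $P \in B(W)$, so that $\varphi(P) = \|\varphi\|$ and $\pi(P) \leq 1$. By Theorem \ref{P2}, there is a subspace $\mathcal{C}$ of $B(W)$ and a set $L$ (arising as $L = \{l \in G : P(l) \geq 0\}$ in the proof of that theorem) with $\varphi^{+} = \varphi|_{L}$ and $\varphi^{-} = \varphi|_{G \setminus L}$, and moreover $\|\varphi\| = \|\varphi^{+}\| + \|\varphi^{-}\|$ with $\varphi|_{L}(P|_{L}) = \|\varphi^{+}\|$ and $\varphi|_{G \setminus L}(P|_{G \setminus L}) = \|\varphi^{-}\|$.

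First I would record the decomposition $\varphi(P) = \varphi(P|_{L}) + \varphi(P|_{G \setminus L})$, valid because $P = P|_{L} + P|_{G \setminus L}$ as elements of $B(W)$ and $\varphi$ is linear. Next I would observe that $\varphi(P|_{L}) = \varphi|_{L}(P|_{L}) = \|\varphi^{+}\| = \|\varphi|_{L}\|$ and similarly $\varphi(P|_{G \setminus L}) = \|\varphi|_{G \setminus L}\|$; here I use that the pieces $P|_{L}$ and $P|_{G \setminus L}$ are supports (in the sense defined in Section 3) for $\varphi|_{L}$ and $\varphi|_{G \setminus L}$ respectively, so each restriction of $\varphi$ attains its norm on the corresponding restricted piece of $P$. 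Then for an arbitrary $A \in \mathcal{C}$, since $A$ is itself built from such pieces, the same identity propagates: $\varphi(P|_{A}) = \|\varphi|_{A}\|$, which is the asserted conclusion $\varphi(P|_{A})\,\|\varphi|_{A}\|$ (read as the equality $\varphi(P|_{A}) = \|\varphi|_{A}\|$).

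The key structural step is the additivity of the norm over the decomposition, $\|\varphi\| = \sum \|\varphi|_{A}\|$ ranging over the pieces, which forces each term $\varphi(P|_{A})$ to equal $\|\varphi|_{A}\|$ rather than merely being bounded by it: if even one piece satisfied $\varphi(P|_{A}) < \|\varphi|_{A}\|$, summing would give $\varphi(P) < \|\varphi\|$, contradicting norm-attainability at $P$. I would phrase this as: $\|\varphi\| = \varphi(P) = \sum_{A} \varphi(P|_{A}) \leq \sum_{A} \|\varphi|_{A}\| = \|\varphi\|$, so equality holds throughout and termwise.

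The main obstacle I anticipate is making precise the indexing set over which $A$ ranges and the sense in which $B(W)$ decomposes as a sum of the restricted subspaces $\mathcal{C}$, together with the measurability/lattice bookkeeping needed to justify $\pi(P|_{A}) \leq \pi(P)$ and the additivity of $\varphi$ across the decomposition on the Orlicz space $W$. Once Theorem \ref{P2} is invoked to supply the base decomposition $L \sqcup (G \setminus L)$, extending to a general $A \in \mathcal{C}$ should be a routine iteration of the same argument, so the corollary follows without further substantial input.
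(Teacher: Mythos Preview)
Your final paragraph contains exactly the argument the paper uses, but you bury it under an unnecessary detour. The paper's proof is the single line
\[
\|\varphi\|=\|\varphi|_{A}\|+\|\varphi|_{G\setminus A}\|\geq \varphi|_{A}(P)+\varphi|_{G\setminus A}(P)=\varphi(P)=\|\varphi\|,
\]
applied directly to an arbitrary $A\in\mathcal{C}$; equality throughout forces $\varphi(P|_{A})=\|\varphi|_{A}\|$. Theorem~\ref{P2} is never invoked, and the special set $L=\{l:P(l)\geq 0\}$ plays no role.

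Your route through Theorem~\ref{P2} is not only superfluous but has a gap: after handling the specific $L$ and $G\setminus L$, you assert that for a general $A\in\mathcal{C}$ ``the same identity propagates'' because ``$A$ is itself built from such pieces.'' That is not justified---an arbitrary measurable $A$ need not decompose nicely relative to $L$, and no iteration or approximation argument is supplied. The correct observation, which you do state at the end, is that the two-piece squeeze works for \emph{any} $A$ directly, using only additivity $\|\varphi\|=\|\varphi|_{A}\|+\|\varphi|_{G\setminus A}\|$ and the hypothesis $\varphi(P)=\|\varphi\|$. Drop everything before your last paragraph and apply that squeeze to $\{A,\,G\setminus A\}$; that is the whole proof.
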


\begin{proof}
 $\|\varphi\|=\|\varphi|_{A}\|+\|\varphi|_{G\setminus A}\|\geq \varphi|_{A}(P)+\varphi|_{G\setminus A}(P)=\varphi(P)=\|\varphi\|$ is enough.
\end{proof}

\begin{theorem}\label{THMM2}
Let $\varphi\in H^{*}$ be singular then the set of all such $\varphi$ is dense in $H^{*}$.
\end{theorem}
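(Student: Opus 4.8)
The plan is to recast the assertion as an approximation statement: the collection of singular functionals is dense in $H^{*}$ precisely when, given any $\varphi\in H^{*}$ and any $\varepsilon>0$, one can exhibit a singular $\psi\in H^{*}$ with $\|\varphi-\psi\|<\varepsilon$. So I would fix such $\varphi$ and $\varepsilon$ and first perform a Yosida--Hewitt-type splitting, which is legitimate because $B(W)$ sits over the Banach lattice $W$: write $\varphi=\varphi_{a}+\varphi_{s}$ with $\varphi_{a}$ order-continuous and $\varphi_{s}$ singular, and further decompose the order-continuous part via the Jordan decomposition $\varphi_{a}=\varphi_{a}^{+}-\varphi_{a}^{-}$ in the style of Theorem \ref{P2}, with $\varphi_{a}^{+}=\varphi_{a}|_{L}$ and $\varphi_{a}^{-}=\varphi_{a}|_{G\setminus L}$. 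If $\varphi_{a}=0$ then $\varphi=\varphi_{s}$ is already singular and there is nothing to prove, so the substantive case is $\varphi_{a}\neq 0$.

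Next I would approximate the order-continuous part by singular functionals using the ``escaping sequence'' mechanism already present in the proof of Proposition \ref{P1}. Choose a monotone sequence $b_{n}\downarrow 0$ with $\sum_{n}b_{n}<\varepsilon$ and maps $J_{n}\in B(W)$ with $\pi(J_{n})\le 2^{-n}$ whose supports run off towards the ``boundary'' of $W$; setting $J(t)=\sup_{n}|J_{n}(t)|$ gives $\pi(J)\le\sum_{n}\pi(J_{n})\le 1$, so $J\in B(W)$. Transporting $\varphi_{a}^{+}$ and $\varphi_{a}^{-}$ along the pieces carrying the $J_{n}$ and passing to a Banach limit of the resulting functionals should produce $\chi^{+},\chi^{-}\in H^{*}_{+}$ that annihilate the order-continuous ideal of $W$ (hence are singular, by the $2^{-n}$ modular estimate) while satisfying $\|\chi^{\pm}\|=\|\varphi_{a}^{\pm}\|$ and $\|\varphi_{a}^{\pm}-\chi^{\pm}\|<b_{n}$-controlled errors. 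Taking $\psi:=\varphi_{s}+\chi^{+}-\chi^{-}$ then yields a singular functional with $\|\varphi-\psi\|\le\|\varphi_{a}^{+}-\chi^{+}\|+\|\varphi_{a}^{-}-\chi^{-}\|<\varepsilon$, giving the density claim and, equivalently, that the cone of singular functionals is norm-closed-generating in $H^{*}$.

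The main obstacle is exactly the transport step of the second paragraph. A priori the singular functionals form a band that is complemented by the order-continuous band, so the approximation $\varphi_{a}\leadsto\chi^{+}-\chi^{-}$ cannot be carried out unless $W$ genuinely has ``room at infinity'', i.e.\ unless the defining Orlicz function fails the $\Delta_{2}$-condition (if $\Delta_{2}$ holds, $H^{*}$ carries no nonzero singular functional and the statement is vacuous and must be read under that hypothesis). Under the failure of $\Delta_{2}$, the delicate point is to show that disjointifying $\varphi_{a}^{\pm}$ against the escaping sequence $\{J_{n}\}$ simultaneously preserves the norm and forces singularity; this is where the modular inequality $\pi(J)\le\sum_{n}\pi(J_{n})\le 1$ and the order-continuity of $\varphi_{a}^{\pm}$ must be used together, and I would isolate it as a separate lemma before assembling the proof. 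Once that lemma is established, the remaining bookkeeping — choosing the $b_{n}$ small, summing the errors, and checking that $\psi$ annihilates the order-continuous ideal — is routine.
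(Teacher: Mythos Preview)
Your reading of the (admittedly garbled) statement is different from the paper's. In the paper the theorem is situated in a sequence of results about \emph{norm-attainability}, and its proof does the following: start from an arbitrary singular $\varphi\in H^{*}$ and $\epsilon>0$; choose $L\in\mathcal{C}$ so that $\|\varphi^{+}|_{G\setminus L}\|<\epsilon$ and $\|\varphi^{-}|_{L}\|<\epsilon$ (an approximate Hahn decomposition); set $\psi=\varphi^{+}|_{L}-\varphi^{-}|_{G\setminus L}$; invoke Theorem~\ref{P2} to conclude that $\psi$ is \emph{norm-attainable}; and check $\|\varphi-\psi\|<2\epsilon$. So what is actually being proved is that the norm-attainable singular functionals are dense in the singular part of $H^{*}$, not that singular functionals are dense in all of $H^{*}$.

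Your proposal aims at the latter claim, and this is where the genuine gap lies: the claim is false in the generality you set up, and the obstacle you yourself flag in the third paragraph is fatal rather than merely delicate. The singular functionals form a band in the order-complete dual, complementary to the band of order-continuous functionals, and in particular they form a norm-closed subspace. If they were norm-dense they would exhaust $H^{*}$, which fails whenever there is any nonzero order-continuous functional (for an Orlicz space this is always the case unless the space is trivial). Concretely, with $\varphi=\varphi_{a}+\varphi_{s}$ and $\varphi_{a}\neq 0$, no ``transport to infinity'' of $\varphi_{a}$ can produce a singular $\chi$ with $\|\varphi_{a}-\chi\|$ small, because the projection onto the order-continuous band is bounded and kills $\chi$ while fixing $\varphi_{a}$. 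Thus the lemma you propose to isolate cannot be established.

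The fix is to realign with the paper's intent: take $\varphi$ singular, produce the approximate Hahn set $L$, define $\psi$ by restriction as above, and appeal to Theorem~\ref{P2} for norm-attainability of $\psi$. No Yosida--Hewitt splitting, Banach limits, or $\Delta_{2}$ discussion is needed.
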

\begin{proof}
Let  $\varphi\in H^{*}$ be singular and $\epsilon >0$ be given. Then we have $L\in \mathcal{C}$ such that $\|\varphi^{+}|_{G\setminus L}\|< \epsilon$ and $\|\varphi^{-}|_{L}\|\leq \epsilon$. Suppose that $\psi=\varphi^{+}|_{L}-\varphi^{-}|_{G\setminus L}$. Then by Theorem \ref{P2} $\psi$ is norm-attainable. Also, $\|\varphi-\psi\|\leq \|\varphi^{+}-\psi|_{l}\|+\|\varphi-\psi|_{G\setminus l}\|=\|\varphi^{+}|_{G\setminus l}\|+\|\varphi^{-}|_{l}\|<2 \epsilon.$
\end{proof}

\begin{theorem}\label{thm3}
Let $B(W)$ be the set of all bounded linear maps on an Orlicz space $W$. Then $\phi=\chi+\varphi\;(0\neq \chi\in H_{0}^{*}, \; \varphi\in H^{*})$ is norm-attainable at $P\in B(W)$ if and only if $\pi(P)=1$, $\varphi(P)=\|\varphi\|$ and $\int_{G}k\chi(t)P(t)dt=\pi(x)+\omega(k\chi)$, where $k\in K_{N}(\chi)=\{k:k^{-1}[1+\omega(k\chi)]=\|\chi\|^{0}_{N}\}$.

\end{theorem}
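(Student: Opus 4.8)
The plan is to mimic the decomposition-and-recombination strategy that drove the proof of Theorem \ref{P2}, now keeping careful track of the extra ``central'' summand $\chi$ and the modular $\pi$ rather than treating $\varphi$ in isolation. I would begin with the forward direction: assume $\phi=\chi+\varphi$ is norm-attainable at $P\in B(W)$, so that $\phi(P)=\|\phi\|$ with $P$ chosen in the unit ball, i.e. $\pi(P)\le 1$. First I would argue that necessarily $\pi(P)=1$: if $\pi(P)<1$ one rescales $P$ slightly to increase $\phi(P)$, contradicting maximality (here one uses that $\chi\ne 0$ and that the modular $\pi$ is strictly monotone along rays, a standard Orlicz-space fact). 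Next, splitting $\phi(P)=\chi(P)+\varphi(P)$ and using $\chi(P)\le\|\chi\|^{0}_{N}\,\pi(P)$ together with $\varphi(P)\le\|\varphi\|$, the equality $\phi(P)=\|\phi\|=\|\chi\|^{0}_{N}+\|\varphi\|$ forces both inequalities to be equalities simultaneously; this is exactly where $\varphi(P)=\|\varphi\|$ drops out, and the equality in the $\chi$-estimate, rewritten via the Amemiya–Orlicz formula, is precisely the integral identity $\int_{G}k\chi(t)P(t)\,dt=\pi(x)+\omega(k\chi)$ for the optimal $k\in K_{N}(\chi)$.

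For the converse I would assume the three displayed conditions and reconstruct the norm. The integral identity together with $k\in K_{N}(\chi)$ gives $\chi(P)=k^{-1}\int_{G}k\chi(t)P(t)\,dt=k^{-1}[1+\omega(k\chi)]=\|\chi\|^{0}_{N}$ after using $\pi(P)=1$ (so that $\pi(x)$ is identified with $\pi(P)=1$ in the formula); combined with $\varphi(P)=\|\varphi\|$ this yields $\phi(P)=\chi(P)+\varphi(P)=\|\chi\|^{0}_{N}+\|\varphi\|$. It then remains to check that $\|\chi\|^{0}_{N}+\|\varphi\|$ is indeed $\|\phi\|$ and not merely a lower bound for $\phi(P)$; here one invokes the singular/central splitting (as in Proposition \ref{P1} and Theorem \ref{P2}, $\chi\in H_{0}^{*}$ lives on a complementary band to $\varphi$), so that $\|\chi+\varphi\|=\|\chi\|+\|\varphi\|$ by the $AL$-type additivity, and $\|\chi\|$ is computed by the Amemiya norm as $\|\chi\|^{0}_{N}$. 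Since $\pi(P)=1$ also certifies $\|P\|\le 1$, the value $\phi(P)=\|\phi\|$ exhibits norm-attainability at $P$.

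The main obstacle I anticipate is the bookkeeping at the interface of the two norms: $\varphi$ is measured in the operator/functional norm on $B(W)$ while $\chi$ is naturally measured by the Luxemburg-type modular $\pi$ and its Amemiya dual $\|\cdot\|^{0}_{N}$, and one must verify that the optimal vector $P$ can be chosen to be simultaneously extremal for both pieces. This is where the band decomposition $G=L\sqcup(G\setminus L)$ from Theorem \ref{P2} does the real work: on the $\varphi$-band one uses Proposition \ref{P1} to attain $\|\varphi\|$, on the complementary band one solves the scalar optimization $\int k\chi P=\pi(x)+\omega(k\chi)$, and the patched operator $P$ still satisfies $\pi(P)=1$ because the modular is additive across the bands. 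Once that compatibility is in place, the computation of $\|\phi\|$ and the reverse inequality are routine, so I would spend the bulk of the write-up justifying the simultaneous extremality and the identification $\pi(x)=\pi(P)=1$ in the integral formula.
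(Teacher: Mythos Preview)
Your strategy is essentially the paper's own: both directions are governed by the single chain
\[
\|\phi\|^{0}=\phi(P)=k^{-1}\langle k\chi, P\rangle+\varphi(P)\le k^{-1}\bigl[\pi(P)+\omega(k\chi)\bigr]+\varphi(P)\le k^{-1}\bigl[1+\omega(k\chi)\bigr]+\|\varphi\|=\|\chi\|^{0}_{N}+\|\varphi\|=\|\phi\|^{0},
\]
and equality throughout is equivalent to the three conditions in the statement. The paper simply writes down this chain and reads off everything at once, then dispatches the remaining direction by a one-line appeal to Proposition~\ref{P1} and Theorem~\ref{THMM2}.

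Two of your detours are unnecessary against this backdrop. First, your rescaling argument for $\pi(P)=1$ is both avoidable and delicate (the Orlicz modular is not positively homogeneous, so ``rescale $P$ slightly'' needs care); in the paper, $\pi(P)=1$ drops out for free from equality in the step $k^{-1}[\pi(P)+\omega(k\chi)]\le k^{-1}[1+\omega(k\chi)]$. Second, your anticipated ``main obstacle'' about simultaneous extremality and a band decomposition $G=L\sqcup(G\setminus L)$ does not arise in the converse: the hypotheses already hand you a single $P$ satisfying all three conditions, so there is nothing to patch---the chain above collapses to equalities immediately and gives $\phi(P)=\|\phi\|^{0}$. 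The only place the earlier results are invoked is to justify the norm decomposition $\|\phi\|^{0}=\|\chi\|^{0}_{N}+\|\varphi\|$, which the paper takes from the singular/regular splitting rather than from an $AL$-type band argument.
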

\begin{proof}
From the statement of the theorem we have $\|\phi\|^{0}=f(P)=k^{-1}\langle k\chi,\; \chi\rangle +\varphi(P)\leq k^{-1}[\pi(P)+\omega(k\chi)]+\varphi(P)\leq k^{-1}[1+\omega(k\chi)]+\|\varphi\|=\|\chi\|^{0}_{N}+\|\phi\|=\|\phi\|^{0}.$ The converse follows from the fact that $\phi$ is singular from Theorem \ref{THMM2} and an assertion from Proposition \ref{P1}.
\end{proof}

 At this point, we consider norm-attainable functionals in Banach lattices. We denote an abstract $L$ space  and abstract $M$ space by $AL$ and $AM$ respectively. For details on $AL$ and $AM$  see \cite{Dun}. We state the following lemma.
\begin{lemma} \label{AB1}
Let $\Omega \in AL$ and $\varphi\in B(\Omega^{*})$. Then the following are equivalent.
\begin{itemize}
  \item [(i).] $\varphi$ is norm-attainable.
  \item [(ii).] Both $\varphi^{+}$ and $\varphi^{-}$ are norm-attainable.
  \item [(iii).] $\varphi^{+}$ or $\varphi^{-}$   is norm one.
\end{itemize}
\end{lemma}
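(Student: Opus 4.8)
The plan is to route everything through condition $(iii)$ and to exploit the Banach‑lattice structure of $\Omega^{*}$. Since $\Omega\in AL$ we have $\Omega^{*}\in AM$, so $\Omega^{*}$ is a Banach lattice and $\varphi$ splits as $\varphi=\varphi^{+}-\varphi^{-}$ with $\varphi^{+}\wedge\varphi^{-}=0$ and $|\varphi|=\varphi^{+}+\varphi^{-}=\varphi^{+}\vee\varphi^{-}$. Two facts from the preliminaries will be used throughout: because $\Omega\in AL$, the norm on $\Omega$ is additive on the positive cone, so $\|x\|=\|x^{+}\|+\|x^{-}\|$ for every $x\in\Omega$; and because $\Omega^{*}\in AM$ and $\varphi^{+},\varphi^{-}$ are disjoint, $\|\varphi\|=\||\varphi|\|=\|\varphi^{+}\vee\varphi^{-}\|=\max\{\|\varphi^{+}\|,\|\varphi^{-}\|\}$. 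We may assume $\|\varphi\|=1$, the zero functional being trivial. I would first dispose of the easy implications $(i)\Rightarrow(iii)$ and $(ii)\Rightarrow(iii)$, and then treat the converse passage from $(iii)$ back to $(i)$ and $(ii)$.

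For $(ii)\Rightarrow(iii)$ there is nothing to do beyond the displayed identity: if $\|\varphi\|=1$ then $\max\{\|\varphi^{+}\|,\|\varphi^{-}\|\}=1$, so $\varphi^{+}$ or $\varphi^{-}$ is of norm one. For $(i)\Rightarrow(iii)$ I would take a unit vector $x\in\Omega$ with $|\varphi(x)|=\|\varphi\|$; after replacing $x$ by $-x$, $\varphi(x)=\|\varphi\|$. Writing $x=x^{+}-x^{-}$ and using $\varphi\le\varphi^{+}$ and $-\varphi\le\varphi^{-}$ on $\Omega_{+}$, together with $\|\varphi^{\pm}\|\le\|\varphi\|$ and the $AL$-additivity, one gets
\[
\|\varphi\|=\varphi(x^{+})-\varphi(x^{-})\le\varphi^{+}(x^{+})+\varphi^{-}(x^{-})\le\|\varphi^{+}\|\,\|x^{+}\|+\|\varphi^{-}\|\,\|x^{-}\|\le\|\varphi\|\bigl(\|x^{+}\|+\|x^{-}\|\bigr)=\|\varphi\|,
\]
so every inequality is an equality. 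Since $\|x^{+}\|+\|x^{-}\|=1$, at least one of $\|x^{\pm}\|$ is positive, and equality in the last step forces the corresponding part to have norm $\|\varphi\|=1$; this is exactly $(iii)$. The same equalities, read as $\varphi^{\pm}(x^{\pm})=\|\varphi^{\pm}\|\,\|x^{\pm}\|$, already show that whichever part is ``carried'' by $x$ attains its norm, which supplies one half of $(ii)$.

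The converse passage $(iii)\Rightarrow(i)$, and the remaining half of $(iii)\Rightarrow(ii)$, is where the real work lies and where I expect the main obstacle. My plan is to invoke the $AL$-representation $\Omega\cong L^{1}(\mu)$, $\Omega^{*}\cong L^{\infty}(\mu)$, and to let $C^{+},C^{-}$ be the disjoint carriers (bands) of $\varphi^{+},\varphi^{-}$: if, say, $\|\varphi^{+}\|=1$ is realised on a subset of $C^{+}$ of positive measure, a normalised indicator of such a set is a unit vector lying in the band on which $\varphi^{-}$ vanishes, so $\varphi=\varphi^{+}-\varphi^{-}$ attains $\|\varphi\|=\|\varphi^{+}\|=1$ there, giving $(i)$; the same vector shows $\varphi^{+}$ attains its norm, and an analogous vector in $C^{-}$ handles $\varphi^{-}$ provided $\|\varphi^{-}\|$ is likewise attained on its carrier. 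The difficulty — and this is the crux of the whole lemma — is that in an infinite-dimensional $AL$ space the bare equality $\|\varphi^{+}\|=1$ does not by itself produce a set of positive measure on which that value is attained, nor does it force $\varphi^{-}$ to attain its norm; so I would read ``$\varphi^{+}$ or $\varphi^{-}$ is norm one'' in $(iii)$ as the assertion that the relevant essential supremum is actually attained on a carrier of positive measure (equivalently, on an atom in the purely atomic case), and I would bring in the $\sigma$-completeness / order-completeness hypotheses recorded in the preliminaries to legitimise the band decompositions above. With that reading the splicing argument closes the cycle $(i)\Rightarrow(iii)\Rightarrow(i)$ and $(ii)\Rightarrow(iii)\Rightarrow(ii)$; without it, the passage from $(iii)$ to $(i)$ and $(ii)$ is precisely the step that needs the most care.
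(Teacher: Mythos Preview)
Your overall strategy is sound and your squeeze argument is essentially the one the paper uses, but the paper organises the proof differently and this organisation removes exactly the obstacle you flag at the end.

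The paper argues in the cycle $(i)\Rightarrow(ii)\Rightarrow(iii)\Rightarrow(i)$ rather than routing both $(i)$ and $(ii)$ through $(iii)$. Your chain
\[
\|\varphi\|=\varphi(x^{+})-\varphi(x^{-})\le\varphi^{+}(x^{+})+\varphi^{-}(x^{-})\le\|\varphi^{+}\|\,\|x^{+}\|+\|\varphi^{-}\|\,\|x^{-}\|\le\|\varphi\|
\]
is precisely the paper's $(i)\Rightarrow(ii)$ computation, and the paper reads off $(ii)$ directly from the forced equalities $\varphi^{\pm}(x^{\pm})=\|\varphi^{\pm}\|\,\|x^{\pm}\|$ and $\varphi^{+}(x^{-})=\varphi^{-}(x^{+})=0$; it does not stop at $(iii)$.

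For $(iii)\Rightarrow(i)$ the paper avoids any $L^{1}(\mu)$ representation or band decomposition. It adopts exactly the reading you propose, taking $(iii)$ to mean that (say) $\varphi^{+}$ has norm one \emph{and} attains it at some $x\ge 0$ with $\|x\|=1$. Then one line finishes:
\[
1=\|\varphi\|\ge |\varphi|(x)=\varphi^{+}(x)+\varphi^{-}(x)=1+\varphi^{-}(x)\ge 1,
\]
so $\varphi^{-}(x)=0$ and $\varphi(x)=\varphi^{+}(x)=1$. This replaces your carrier argument entirely, and it needs neither $\sigma$-completeness nor atoms.

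Finally, the difficulty you isolate in $(iii)\Rightarrow(ii)$ --- that $(iii)$ says nothing about the smaller part attaining its norm --- simply does not arise in the cyclic ordering: once $(iii)\Rightarrow(i)$ is established, $(ii)$ comes for free from the $(i)\Rightarrow(ii)$ chain above. So your concern is legitimate for the direct implication $(iii)\Rightarrow(ii)$, but the paper never attempts that implication directly.
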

\begin{proof} $(i) \Rightarrow (ii).$ Choose $x\in B(\Omega)$ such that $\varphi(x)=\|\varphi\|=1$. From
\begin{eqnarray*}
  1=\|\varphi\| &=& \varphi(x)=\varphi^{+}(x^{+})+\varphi^{-}(x^{-})-\varphi^{+}(x^{-})-\varphi^{-}(x^{+}) \\
   &=& \|\varphi^{+}\|\|x^{+}\|+\|\varphi^{-}\|\|x^{-}\|-\varphi^{+}(x^{-})-\varphi^{-}(x^{+}) \\
   &=& \|\varphi\|(\|x^{+}\|+\|x^{-}\|)=\|\varphi\|\|x\|=\|\varphi\|=1
\end{eqnarray*}
we obtain $\varphi^{+}(x^{+})=\|\varphi^{+}\|\|x^{+}\|;\;\;\; \varphi^{-}(x^{-})=\|\varphi^{-}\|\|x^{-}\|$ and $\varphi^{+}(x^{-})=\varphi^{-}(x^{+})=0$ since $\varphi^{\pm}(x^{\pm})\leq \|\varphi^{\pm}\|\|x^{\pm}\|$ and $\varphi^{\pm},\; (x^{\pm})$ are non-negative.\\
$(ii) \Rightarrow (iii).$ This follows obviously.\\
$(iii) \Rightarrow  (i).$ We suppose that $\varphi^{+}$ is norm one and norm-attainable. choose $x\in B(\Omega^{+})$ such that $\varphi^{+}(x)=\|\varphi\|=1$. We have $\varphi^{-}(x)=0$. Indeed, $1=\|\varphi\| \geq |\varphi|(|x|)\geq \varphi^{+}(x)+\varphi^{-}(|x|)=1+\varphi^{-}(|x|)\geq 1,$ which implies that $\varphi^{-}(x^{+})=\varphi^{-}(x^{-})=0$. Hence, $ \varphi(x)=\varphi^{+}(x)=\|\varphi\|=1$.
\end{proof}

\begin{theorem}
Let $\Omega\in AL$ and $\theta\leq \varphi\in B(\Omega^{*})$. Then the following are equivalent.
\begin{itemize}
  \item [(i).] $\varphi$ is norm-attainable.
  \item [(ii).] There exists $\theta\leq x\neq \theta$ such that $\varphi(y)=\|y\|\; \text{for all} \; \overline{E}_{\Omega}$, the norm closure of $E_{\Omega}$ where $E_{\Omega}=\{y\in \Omega: \; \theta\leq y\leq nx,\; \text{for some}\; n>0\}.$
  \item [(iii).] There exists $\theta \neq x\in \Omega^{+}$ such that among $B(\Omega^{*})=\{\psi\in \Omega^{*};\; \|\psi\|=1\}$, $\varphi$ is maximal on $\overline{E}_{\Omega}$.
  \end{itemize}
  \end{theorem}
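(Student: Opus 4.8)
The plan is to establish the cycle of implications $(i)\Rightarrow(ii)\Rightarrow(iii)\Rightarrow(i)$, exploiting the $AL$-structure throughout. The key tool is Lemma \ref{AB1}, together with the characterization of the norm on an $AL$-space: since $\Omega\in AL$, for $y,z\geq\theta$ one has $\|y+z\|=\|y\|+\|z\|$, and more generally $\|y\|=\varphi_{0}(y)$ for the canonical positive functional $\varphi_{0}$ of norm one that every $AL$-space carries (the ``integral'' functional). For a positive $\varphi\in B(\Omega^{*})$, norm-attainability means there is a unit vector $x\geq\theta$ (one may take $x$ positive by Lemma \ref{AB1}, passing to $x^{+}$) with $\varphi(x)=\|x\|=1$.

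For $(i)\Rightarrow(ii)$, I would take the norm-attaining unit vector $\theta\leq x\neq\theta$ from part $(i)$ and show $\varphi(y)=\|y\|$ for every $y$ in the order ideal $E_{\Omega}$ generated by $x$, then pass to the closure by continuity of both $\varphi$ and $\|\cdot\|$. First, for $0\leq y\leq nx$ write $nx=y+(nx-y)$ with both summands positive; applying $\varphi$ and using $\varphi(x)=1$, $\|\varphi\|=1$, and the additivity of the $AL$-norm on positive elements gives $n=\varphi(y)+\varphi(nx-y)\leq\|y\|+\|nx-y\|=n$, forcing equality in each of $\varphi(y)\leq\|y\|$ and $\varphi(nx-y)\leq\|nx-y\|$; hence $\varphi(y)=\|y\|$. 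Continuity extends this to $\overline{E}_{\Omega}$.

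For $(ii)\Rightarrow(iii)$: given such an $x$, I claim $\varphi$ is maximal among norm-one functionals on the set $\overline{E}_{\Omega}$, i.e.\ there is no $\psi\in B(\Omega^{*})$ with $\psi\geq\varphi$ on $\overline{E}_{\Omega}$ and $\psi\neq\varphi$ there. Indeed, if $\psi\in\Omega^{*}$ has $\|\psi\|=1$ and $\psi(y)\geq\varphi(y)=\|y\|$ for all $\theta\leq y\in\overline{E}_{\Omega}$, then combined with $\psi(y)\leq\|\psi\|\,\|y\|=\|y\|$ we get $\psi(y)=\|y\|=\varphi(y)$ on the positive cone of $\overline{E}_{\Omega}$, and since $E_{\Omega}$ is an ideal its positive cone spans it, so $\psi=\varphi$ on $\overline{E}_{\Omega}$; this is exactly maximality. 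For $(iii)\Rightarrow(i)$: suppose $\varphi$ is maximal on $\overline{E}_{\Omega}$ for some $\theta\neq x\in\Omega^{+}$ but is not norm-attainable. Normalize so $x\in B(\Omega)$. The idea is that the restriction $\varphi|_{\overline{E}_{\Omega}}$, living on the band generated by $x$, which is itself an $AL$-space in which $x$ is a quasi-interior point, must attain its norm there (the canonical positive functional of that band does, by the $AL$-norm formula), and maximality forces $\varphi$ to coincide with that attaining functional on $\overline{E}_{\Omega}$; evaluating at a suitable unit vector of $\overline{E}_{\Omega}$ then contradicts non-attainability of $\varphi$ on $\Omega$.

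The main obstacle I anticipate is the final implication $(iii)\Rightarrow(i)$: turning ``maximal on $\overline{E}_{\Omega}$'' into genuine norm-attainment on all of $\Omega$ requires knowing that the band $\overline{E}_{\Omega}$ contains an element at which $\varphi$'s norm is realized, which uses the reflexivity/completeness structure of the $AL$-space band and the existence of the canonical $L^{1}$-type functional there; one must be careful that $\varphi$ restricted to $\overline{E}_{\Omega}$ still has norm one (this follows from $(ii)$-type reasoning but should be verified) and that the witnessing unit vector genuinely lies in $\overline{E}_{\Omega}\subseteq\Omega$. Handling the case where $\overline{E}_{\Omega}$ is not all of $\Omega$ — i.e.\ ensuring the ``complementary band'' does not sabotage the norm — is where the $AL$-decomposition $\Omega=\overline{E}_{\Omega}\oplus\overline{E}_{\Omega}^{\,d}$ and the additivity of the norm across this band decomposition will be essential.
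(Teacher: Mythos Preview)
Your arguments for $(i)\Rightarrow(ii)$ and $(ii)\Rightarrow(iii)$ are correct and essentially coincide with the paper's: pick a positive unit vector $x$ at which $\varphi$ attains its norm, write $nx=(nx-y)+y$ with both summands positive, and use the $AL$ norm-additivity on the positive cone to force $\varphi(y)=\|y\|$ on $E_{\Omega}$, then pass to the closure by continuity; the maximality in $(iii)$ is then immediate because $\varphi(y)=\|y\|\geq\psi(y)$ for every $\psi$ of norm one.

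Where you diverge is $(iii)\Rightarrow(i)$. You anticipate needing band decompositions, the canonical $L^{1}$-functional on $\overline{E}_{\Omega}$, and control over the complementary band, and you flag this as the ``main obstacle''. The paper's argument is a single line and avoids all of this: given $\theta\neq x\in\Omega^{+}$, choose (by Hahn--Banach) any $\psi\in B(\Omega^{*})$ with $\psi(x)=\|x\|$; since $x\in\overline{E}_{\Omega}$, maximality of $\varphi$ on $\overline{E}_{\Omega}$ gives $\varphi(x)\geq\psi(x)=\|x\|$, hence $\varphi(x)=\|x\|$ and $\varphi$ attains its norm at $x/\|x\|$. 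No restriction, no reflexivity, no band decomposition is needed---the supporting functional $\psi$ lives on all of $\Omega$ from the start, and maximality is only applied at the single point $x$. Your proposed route via the canonical functional on the band would also work (that functional, extended by zero on the complementary band, is exactly such a $\psi$), but the machinery you invoke is unnecessary and the ``obstacles'' you worry about are artifacts of the detour rather than genuine difficulties.
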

\begin{proof}
 $(i) \Rightarrow  (ii).$ Choose $x\in B(\Omega)$ satisfying $\varphi(x)=\|x\|=1$. We have $x\in \theta$ for $1=\varphi(x)=\varphi(x^{+})=\varphi(x^{+})-\varphi(^{-})\leq \varphi(^{+})\leq \|x^{+}\|-\|x^{-}\|\leq 1$ by Lemma \ref{AB1} which implies $\|x^{-}\|=0$. Now if we consider $y\in \overline{E}_{\Omega} $, we need to prove that $\varphi(y)=\|y\|$. Since $\varphi$ is continuous, let $y\in \overline{E}_{\Omega}$, i.e $\theta \leq y\leq nx$ for some $n\geq 0$. But since $n=\varphi(nx)=\varphi(nx-y)+\varphi(y)\leq \|nx-y\|+\|y\|=\|nx\|=n$, we have $\varphi(y)=\|y\|$.\\
$(ii) \Rightarrow  (iii).$ Follows trivially from $(ii) \Rightarrow (iii)$ in Lemma \ref{AB1} and abstractness of $AL$. \\
$(iii) \Rightarrow  (i).$ Let $\varphi$ be maximal in $B(\Omega^{*})$ on $\overline{E}_{\Omega}$ for some $\theta\neq x\in \Omega^{+}$. Choose$\psi\in B(\Omega_{*})$ such that $\psi(x)=\|x\|$, then by $\varphi(x)\geq \psi(x)=\|x\|,$ it is clear that $\varphi$ is norm-attainable at $x/ \|x\|$.
\end{proof}

\begin{proposition}
 Let $\Omega\in AM$ be $\sigma$-complete and $\varphi\in \Omega^{*}$. Then for any $\epsilon> 0$, there exists a subspace $E$ of $\Omega=E+E^{\perp}$ and $\|\varphi^{+}|_{E^{\perp}}\|<\epsilon,\; \|\varphi^{-}|_{E}\|<\epsilon$.
\end{proposition}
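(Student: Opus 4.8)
The plan is to mimic the singular-decomposition strategy already used in Theorem~\ref{THMM2} and Theorem~\ref{thm3}, but now in the order-theoretic setting of an abstract $M$-space. First I would decompose the functional. Since $\Omega \in AM$ is $\sigma$-complete, its dual $\Omega^{*}$ is an abstract $L$-space (by the duality quoted after Definition of $AM/AL$), hence an order-complete Banach lattice; in particular every $\varphi \in \Omega^{*}$ admits a Jordan decomposition $\varphi = \varphi^{+} - \varphi^{-}$ with $\varphi^{\pm} \ge \theta$ and $\|\varphi\| = \|\varphi^{+}\| + \|\varphi^{-}\|$ (using the $AL$-norm additivity on positive elements). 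The goal is then to produce a band projection on $\Omega$ that almost perfectly separates the carriers of $\varphi^{+}$ and $\varphi^{-}$.

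Next I would construct the band. Because $\Omega^{*} \in AL$ is order complete, the positive functionals $\varphi^{+}$ and $\varphi^{-}$ are "almost orthogonal" in the sense that $\varphi^{+} \wedge \varphi^{-} = \theta$ in $\Omega^{*}$; by the Freudenthal/band-decomposition theorem in $\Omega^{*}$ there is a band projection separating them exactly. Pulling this back: $\sigma$-completeness of $\Omega$ guarantees that $\Omega$ itself splits as a direct sum $\Omega = E \oplus E^{\perp}$ of a band $E$ and its disjoint complement $E^{\perp}$, and one can choose $E$ so that $\varphi^{+}$ is carried (up to an $\epsilon$-error in norm) by $E$ while $\varphi^{-}$ is carried (up to an $\epsilon$-error) by $E^{\perp}$. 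Concretely, I would take an increasing sequence of band projections exhausting the carrier of $\varphi^{+}$, use $\sigma$-completeness to pass to the supremum $E$, and then estimate $\|\varphi^{+}|_{E^{\perp}}\|$ and $\|\varphi^{-}|_{E}\|$ by the tails of that sequence, shrinking below $\epsilon$.

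Finally I would verify the two norm estimates. On $E^{\perp}$ the functional $\varphi^{+}|_{E^{\perp}}$ is dominated by the part of $\varphi^{+}$ disjoint from its own carrier, which is exactly the error term made small in the construction; symmetrically for $\varphi^{-}|_{E}$. Here the $AM$-property of $\Omega$ is used to split norms of disjoint elements as maxima, which keeps the restriction norms controlled. The main obstacle I anticipate is the passage from a band decomposition in $\Omega^{*}$ to an honest subspace splitting $\Omega = E + E^{\perp}$ of $\Omega$: in general $\Omega$ need not be order complete, so one must lean carefully on $\sigma$-completeness (countable suprema of order-bounded sequences) to guarantee the relevant projections exist in $\Omega$, and to ensure the $\epsilon$-approximation is attained by a \emph{countable} exhaustion rather than an uncountable one. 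Once that projection is in hand, the norm estimates are routine applications of Jordan-decomposition additivity and the $AM$/$AL$ norm identities.
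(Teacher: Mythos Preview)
Your proposal is considerably more elaborate than the paper's argument, and the step you yourself flag --- transferring a band decomposition from $\Omega^{*}$ back to a splitting $\Omega = E \oplus E^{\perp}$ of the primal space --- is where it stalls. There is no general mechanism that turns a band projection in $\Omega^{*}$ into one in $\Omega$, and your sentence about ``an increasing sequence of band projections exhausting the carrier of $\varphi^{+}$'' never says where these projections on $\Omega$ come from, why countably many suffice, or why their supremum is the right $E$. Without that, $\sigma$-completeness of $\Omega$ has nothing to act on, and the Freudenthal/carrier machinery in the dual is left floating.

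The paper bypasses all of this with a one-element trick. Pick a single $x$ in the closed unit ball with $\varphi(x) > \|\varphi\| - \epsilon$, and let $E$ be the band determined by the sign decomposition of $x$ (so that $x^{+} \in E$ and $x^{-} \in E^{\perp}$, with $\Omega = E + E^{\perp}$). Because $\Omega^{*} \in AL$, the four restriction norms add:
\[
\|\varphi^{+}|_{E}\| + \|\varphi^{+}|_{E^{\perp}}\| + \|\varphi^{-}|_{E}\| + \|\varphi^{-}|_{E^{\perp}}\| \;=\; \|\varphi^{+}\| + \|\varphi^{-}\| \;=\; \|\varphi\| \;<\; \varphi(x) + \epsilon.
\]
Expanding $\varphi(x)$ along $E$ and $E^{\perp}$ and using that $\varphi^{+}|_{E^{\perp}}(x) \le 0$, $\varphi^{-}|_{E}(x) \ge 0$ (signs forced by $x^{\pm}$), one rearranges to get $\|\varphi^{+}|_{E^{\perp}}\| + \|\varphi^{-}|_{E}\| < \epsilon$ in a single line. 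No Freudenthal, no carriers, no exhaustion: the near-norming element $x$ already hands you the band via its own Jordan decomposition $x = x^{+} - x^{-}$. Your route might be salvageable, but it would amount to reconstructing exactly this element-level argument inside the abstract framework, at greater cost.
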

\begin{proof}
Choose $x\in D(\Omega)$ with the property that $\varphi^{+}(x)>\|\varphi\|-\epsilon$, and $E=(x^{\perp})^{\perp}$. Then $x^{+}\in E, \; x^{-}\in E^{\perp}$ and $\Omega=E+E^{\perp}$. Furthermore, for $x\in D(\Omega)$ we have that
$  \|\varphi^{+}|_{E}\|+\|\varphi^{+}|_{E^{\perp}}\|+\|\varphi^{-}|_{E}\|+\|\varphi^{-}|_{E^{\perp}}\| = \|\varphi^{+}|\|+\|\varphi^{+}\|=\|\varphi\|<\varphi(x)+\epsilon
  = \varphi^{+}|_{E}(x)+\varphi^{+}|_{E^{\perp}}(x) -\varphi^{-}|_{E}(x)
   -\varphi^{+}|_{E^{\perp}}(x)+\epsilon
$
because $\varphi^{+}|_{E^{\perp}}(x)\leq 0$ and $\varphi^{-}|_{E}(x)\geq 0$. So, we conclude that
 $ \|\varphi^{+}|_{E^{\perp}}\|+\|\varphi^{-}|_{E}\| =\|\varphi^{+}|\|-\|\varphi^{+}|_{E}\|+\|\varphi^{-}|\|-\|\varphi^{-}|_{E}\|
   \leq\|\varphi^{+}\|-\varphi^{+}|_{E}(x)+ \|\varphi^{-}\|-\varphi^{-}|_{E^{\perp}}(x)
   <\varphi^{+}|_{E^{\perp}}(x)-\varphi^{-}|_{E}(x)+\epsilon\leq \epsilon.
$
This completes the proof as required.
\end{proof}

\begin{theorem} \label{thm4}
Let a Banach lattice $\Omega$ be bounded $\sigma$-complete and $B(\Omega)$  order-closed, then every positive bounded linear $\varphi\in \Omega^{*}$ is norm-attainable.
\end{theorem}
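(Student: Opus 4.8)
The plan is to produce a unit vector (here, a positive norm-one element) at which $\varphi$ attains its norm by a monotone-approximation argument that uses the two hypotheses in an essential way: bounded $\sigma$-completeness will furnish the limit element, and order-closedness of $B(\Omega)$ will guarantee the limit still lies in the closed unit ball. First I would fix $\varphi\in\Omega^{*}$ with $\varphi\geq\theta$ and, without loss of generality, $\|\varphi\|=1$. By definition of the operator norm there is a sequence $x_{n}\in B(\Omega)$ with $\varphi(x_{n})\to 1$; since $\varphi$ is positive and $\varphi(|x_{n}|)\geq|\varphi(x_{n})|$, I may replace $x_{n}$ by $|x_{n}|\in B(\Omega^{+})$ and assume $\theta\leq x_{n}$, $\|x_{n}\|\leq 1$, $\varphi(x_{n})\to 1$.

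Next I would pass to an increasing sequence. Set $y_{n}=x_{1}\vee x_{2}\vee\cdots\vee x_{n}$. Then $\{y_{n}\}$ is order-monotone increasing and $\theta\leq y_{n}$; moreover, since $\Omega\in AM$ and the $x_{i}$ are positive, $\|y_{n}\|=\max\{\|x_{1}\|,\dots,\|x_{n}\|\}\leq 1$, so $\{y_{n}\}\subset B(\Omega)$ and the sequence is norm bounded. By bounded $\sigma$-completeness, $y_{n}$ is order convergent to some $y\in\Omega$ with $y=\bigvee_{n}y_{n}$; because $B(\Omega)$ is order-closed and each $y_{n}\in B(\Omega)$, we get $y\in B(\Omega)$, i.e. $\theta\leq y$ and $\|y\|\leq 1$. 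Finally, positivity of $\varphi$ gives $\varphi(y)\geq\varphi(y_{n})\geq\varphi(x_{n})\to 1$, while $\varphi(y)\leq\|\varphi\|\,\|y\|\leq 1$; hence $\varphi(y)=1=\|\varphi\|$. If $\|y\|<1$ this would contradict $\varphi(y)\leq\|y\|$, so in fact $\|y\|=1$, and $y$ (or $y/\|y\|$) is the desired norm-attaining unit vector. Thus $\varphi$ is norm-attainable.

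The step I expect to be the main obstacle is the transition from order convergence of $y_{n}$ to the norm statement $\varphi(y)=\lim\varphi(y_{n})$ together with $y\in B(\Omega)$: one must be careful that ``$B(\Omega)$ order-closed'' is exactly what licenses $y\in B(\Omega)$ from $y_{n}\uparrow y$ with $y_{n}\in B(\Omega)$, and that positivity of $\varphi$ (not continuity) is what forces $\varphi(y)\geq\sup_{n}\varphi(y_{n})$. A secondary subtlety is justifying $\|y_{n}\|\le 1$; this is where the abstract $M$-space structure is used, via $\|x\vee y\|=\max\{\|x\|,\|y\|\}$ for positive $x,y$, so that taking finite suprema never increases the norm past $1$. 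Once these two points are pinned down the remaining inequalities are routine.
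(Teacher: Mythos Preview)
Your argument is essentially the paper's own proof: pick positive $x_{n}$ in the unit ball with $\varphi(x_{n})\to\|\varphi\|$, form $y=\bigvee_{n}x_{n}$ using bounded $\sigma$-completeness together with order-closedness of $B(\Omega)$, and use positivity of $\varphi$ to squeeze $\varphi(y)=\|\varphi\|$. The only difference is cosmetic---you spell out the intermediate finite suprema $y_{n}=\bigvee_{k\le n}x_{k}$ and explicitly invoke the $AM$ property to keep $\|y_{n}\|\le 1$, whereas the paper simply asserts $\|y\|=1$ without isolating this step (note, incidentally, that $\Omega\in AM$ is not listed among the hypotheses of this particular theorem, though it appears in the surrounding results; the paper's proof tacitly relies on the same point you make explicit).
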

\begin{proof}
Consider $x_{n}(\geq 0)\in D(\Omega)$ such that $\varphi(x_{n})\rightarrow \|\varphi\|$. Since $\Omega$ is bounded $\sigma$-complete and $B(\Omega)$ is closed under order, $y=\vee_{n}(x_{n})$ exists in $\Omega$ and $\|y\|=1$. Hence, $y\geq x_{n}\geq 0$ and $\varphi\geq 0$ implies $\|\varphi\|\geq \varphi(y)\geq \varphi(x_{n})\rightarrow \|\varphi\|.$ So,  $x\in D(\Omega)$  exists which satisfies the norm-attainability condition, $\varphi(x)=\|\varphi\|,$ for functionals and this completes the proof.
\end{proof}

\begin{theorem}
Let $\Omega\in AM$ be  bounded $\sigma$-complete and $B(\Omega)$ order-closed, then $\varphi\in \Omega^{*}$ is norm-attainable if and only if there exists a subspace $E$ of $\Omega$ such that $\varphi^{+}=\varphi|_{E}, \; \varphi^{-}=-\varphi|_{E^{\perp}}$.
\end{theorem}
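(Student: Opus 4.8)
The plan is to prove the two implications separately, mirroring the structure used for the $AL$-case in Lemma \ref{AB1} and the subsequent theorems, but now exploiting the order structure of $AM$ spaces together with the bounded $\sigma$-completeness hypothesis, which (via Theorem \ref{thm4}) guarantees that positive functionals on $\Omega^{*}$ are norm-attainable.

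\textbf{Sufficiency.} Suppose there is a subspace $E$ with $\Omega = E + E^{\perp}$ such that $\varphi^{+} = \varphi|_{E}$ and $\varphi^{-} = -\varphi|_{E^{\perp}}$. First I would decompose any $x\in D(\Omega)$ as $x = x_{E} + x_{E^{\perp}}$ according to the band decomposition $\Omega = E + E^{\perp}$. Since $\Omega\in AM$, for $x \geq 0$ one has $\|x\| = \max\{\|x_{E}\|, \|x_{E^{\perp}}\|\}$. The functionals $\varphi^{+}$ and $\varphi^{-}$ are positive on $\Omega^{*}$, hence norm-attainable by Theorem \ref{thm4}; choose unit vectors $u \in E^{+}$, $v \in E^{\perp+}$ with $\varphi^{+}(u) = \|\varphi^{+}\|$ and $\varphi^{-}(v) = \|\varphi^{-}\|$. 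Setting $x_{0} = u \vee v$ (which lies in $D(\Omega)$ with $\|x_0\| = 1$ by the $AM$ property), I would compute
\[
\varphi(x_{0}) = \varphi|_{E}(u) - \bigl(-\varphi|_{E^{\perp}}(v)\bigr) \;\text{handled termwise}
\]
and, using that $\varphi^{+}$ vanishes on $E^{\perp}$ while $\varphi^{-}$ vanishes on $E$, conclude $\varphi(x_{0}) = \|\varphi^{+}\| + \|\varphi^{-}\| = \|\varphi\|$, so $\varphi$ is norm-attainable at $x_{0}$.

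\textbf{Necessity.} Conversely, assume $\varphi$ is norm-attainable, say $\varphi(x) = \|\varphi\| = 1$ for some $x \in D(\Omega)$. Write $x = x^{+} - x^{-}$. As in the proof of Lemma \ref{AB1}, from
\[
1 = \varphi(x) = \varphi^{+}(x^{+}) + \varphi^{-}(x^{-}) - \varphi^{+}(x^{-}) - \varphi^{-}(x^{+}) \leq \|\varphi^{+}\|\,\|x^{+}\| + \|\varphi^{-}\|\,\|x^{-}\|
\]
together with $\|x^{+}\|, \|x^{-}\| \le 1$, force $\varphi^{+}(x^{-}) = \varphi^{-}(x^{+}) = 0$ and the remaining terms to be extremal. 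I would then take $E$ to be the band generated by $x^{+}$, i.e. $E = (x^{+})^{\perp\perp}$, so that $E^{\perp}$ is the band generated by $x^{-}$; this uses the $\sigma$-completeness so that bands split $\Omega$. The vanishing relations above, propagated from $x^{+}$ and $x^{-}$ to all of $E$ and $E^{\perp}$ by a monotone-sequence / order-closure argument (invoking that $B(\Omega)$ is order-closed and $\Omega$ is bounded $\sigma$-complete), give $\varphi^{-}|_{E} = 0$ and $\varphi^{+}|_{E^{\perp}} = 0$; hence on $E$ we have $\varphi|_{E} = \varphi^{+}|_{E} = \varphi^{+}$ and on $E^{\perp}$ we have $\varphi|_{E^{\perp}} = -\varphi^{-}|_{E^{\perp}} = -\varphi^{-}$, which is the desired band decomposition of $\varphi$.

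\textbf{Main obstacle.} The delicate point is the necessity direction: extending the equalities $\varphi^{-}(x^{+}) = 0$ and $\varphi^{+}(x^{-}) = 0$ from the single elements $x^{\pm}$ to the full bands they generate. One has to argue that if $0 \le y \le n x^{+}$ then $\varphi^{-}(y) = 0$ (by positivity and monotonicity of $\varphi^{-}$), and then pass to the norm/order closure of such $y$ to cover all of $E = (x^{+})^{\perp\perp}$ — this is exactly where bounded $\sigma$-completeness of $\Omega$ and order-closedness of $B(\Omega)$ are needed, to ensure that suprema of the relevant increasing sequences stay in $D(\Omega)$ and that $\varphi^{-}$ is continuous along them. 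I would isolate this as the crux of the argument and handle it by the same supremum construction $y = \vee_{n} y_{n}$ used in Theorem \ref{thm4}.
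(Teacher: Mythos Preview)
Your proposal has two genuine gaps, one in each direction.

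\textbf{Sufficiency.} The test vector $x_{0}=u\vee v$ does not work. Since $u\in E^{+}$ and $v\in (E^{\perp})^{+}$ are disjoint, $u\vee v=u+v$ and
\[
\varphi(u+v)=\varphi|_{E}(u)+\varphi|_{E^{\perp}}(v)=\varphi^{+}(u)+\bigl(-\varphi^{-}(v)\bigr)=\|\varphi^{+}\|-\|\varphi^{-}\|,
\]
not $\|\varphi^{+}\|+\|\varphi^{-}\|$. The paper takes instead $x_{0}=u-v$; then $\|u-v\|=\max\{\|u\|,\|v\|\}=1$ by the $AM$ property, and $\varphi(u-v)=\varphi^{+}(u)+\varphi^{-}(v)=\|\varphi\|$. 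This is a simple but essential sign fix.

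\textbf{Necessity.} The claim that with $E=(x^{+})^{\perp\perp}$ one has ``$E^{\perp}$ is the band generated by $x^{-}$'' is false in general: $(x^{+})^{\perp}$ can be strictly larger than $(x^{-})^{\perp\perp}$ (there is no reason $|x|$ should be a weak order unit). Consequently your propagation argument, which only extends $\varphi^{+}(x^{-})=0$ to the band generated by $x^{-}$, does \emph{not} yield $\varphi^{+}|_{E^{\perp}}=0$. The same obstruction appears symmetrically whichever of the two bands you start from, so the ``main obstacle'' you isolate is not merely delicate --- as stated, the strategy cannot close.

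The paper avoids this entirely. It sets $E=(x^{-})^{\perp}$ and does \emph{not} try to propagate the pointwise vanishing of $\varphi^{\pm}$; instead it proves directly that $\varphi|_{E}\ge 0$ (and $-\varphi|_{E^{\perp}}\ge 0$) by contradiction: if $\varphi|_{E}(y)<0$ for some $0\le y\in D(\Omega)\cap E$, form $z=-x^{-}-y$, use the $AM$ identity $\|z\|=\max\{\|x^{-}\|,\|y\|\}=1$, and derive $\|\varphi\|>\varphi(x)=\|\varphi\|$. Once $\varphi|_{E}\ge 0$ and $-\varphi|_{E^{\perp}}\ge 0$ are known, the equality $\|\varphi\|=\|\varphi|_{E}\|+\|\varphi|_{E^{\perp}}\|$ forces this positive/negative splitting to coincide with the Jordan decomposition. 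The $AM$ norm identity on disjoint elements is the key device here, and it is precisely what your band-propagation outline is missing.
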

\begin{proof}
\emph{Necessity.} Let $x\in B(\Omega)$ be having the property that $\varphi|(x)=\|\varphi\|$, and define $E=(x^{-})^{\perp}$. Then $\Omega=E+E^{\perp}$ and $x^{+}\in E, \; x^{-}\in E^{\perp}$. Now, $\|\varphi\|=\|\varphi|_{E}\|+\|\varphi|_{E^{\perp}}\|$; we need to show that $\varphi^{+}=\varphi|_{E}$ and $\varphi^{-}=-\varphi|_{E^{\perp}}$, it is enough that $\varphi|_{E}\geq 0$ and  $-\varphi|_{E^{\perp}}\geq 0$. Indeed, if $\varphi_{E}(y)<0$ for some $y(\geq 0)\in D(\Omega)$ then we let $y\in E$. So, $z=-x^{-}-y$ satisfies $\|z\|=\max\{\|x\|,\; \|y\|\}=1$ and hence,
$
  \|\varphi^{-}\| = \varphi^{-}(-z)=\varphi(z)-\varphi^{+}(z)\geq \varphi(z)
 = \varphi|_{E^{\perp}}(-x^{-})-\varphi|_{E}(y)>\varphi|_{E^{\perp}}(-x^{-})=-\varphi|_{E^{\perp}}(-x).
$
Now since $\|\varphi^{+}\|\geq \varphi (x|_{E})=\varphi|_{E}(x)$, this is contrary to
$ \|\varphi\| = \|\varphi^{+}\|+\|\varphi^{-}\|>\varphi|_{E}(x)-\varphi|_{E^{\perp}}(x)= \varphi(x)=\|\varphi\|$. Also $-\varphi|_{E^{\perp}}\geq 0$ follows analogously.\\
\emph{Sufficiency.} We know that there exists $x,y(\geq 0)\in D(\Omega)$ such that $\varphi^{+}(x)=\|\varphi^{+}\|$ and $\varphi^{-}(x)=\|\varphi^{-}\|$ from Theorem \ref{thm4}. Since $\varphi^{+}=\varphi|_{E}$ and $\varphi^{-}=-\varphi|_{E^{\perp}}$, we let $x\in E$ and $y\in E^{\perp}$ and $u\in x-y$. Then $u=\|x-y\|=\max\{\|x\|,\|y\|\}=1$ and so we have
$ \|\varphi\| = \|\varphi^{+}\|+\|\varphi^{-}\|=\varphi^{+}(x)+\varphi^{-}(y)
   = \varphi|_{E}(x)+\varphi|_{E^{\perp}}(-y)=\varphi(u).
$
\end{proof}

\begin{corollary}
Let $\Omega\in AM$ be a $\sigma$-complete and $\varphi\in D(\Omega^{*})$. Then $\varphi \in ext B(\Omega^{*})$ if and only if $\varphi (x)\varphi (y)=0$ for all $x,y\in \Omega$ satisfying $x\wedge y=0$.
\end{corollary}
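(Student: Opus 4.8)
The plan is to pass to the dual lattice and use the standard picture of extreme points of the ball of an $AL$-space. Since $\Omega\in AM$ is $\sigma$-complete, the preliminaries give $\Omega^{*}\in AL$, so the norm of $\Omega^{*}$ is additive on the positive cone and the $\sigma$-completeness of $\Omega$ supplies enough band projections to split positive functionals. Two observations streamline the bookkeeping: the hypothesis $x\wedge y=0$ already forces $x,y\ge 0$ (since $0=x\wedge y\le x$), and the condition $\varphi(x)\varphi(y)=0$ is unchanged under $\varphi\mapsto-\varphi$; hence, after Step~1 below, I may assume $\varphi\ge 0$ and $\|\varphi\|=1$ throughout.

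\emph{Step 1: reduction to $\varphi\ge 0$.} Write $\varphi=\varphi^{+}-\varphi^{-}$; since $\Omega^{*}\in AL$ we have $\|\varphi\|=\|\varphi^{+}\|+\|\varphi^{-}\|=1$. If $\varphi$ is extreme and both parts are nonzero, then $\varphi=\|\varphi^{+}\|\cdot(\varphi^{+}/\|\varphi^{+}\|)+\|\varphi^{-}\|\cdot(-\varphi^{-}/\|\varphi^{-}\|)$ is a proper convex combination of two distinct norm-one functionals (one $\ge 0$, the other $\le 0$), which is impossible; so $\varphi=\pm|\varphi|$. Conversely, if the multiplicativity condition holds and both parts are nonzero, I would invoke the preceding Proposition on band splittings in $\sigma$-complete $AM$-spaces: for small $\epsilon>0$ it gives $\Omega=E\oplus E^{\perp}$ with $\|\varphi^{+}|_{E^{\perp}}\|<\epsilon$ and $\|\varphi^{-}|_{E}\|<\epsilon$; then one can pick $0\le x$ with support in $E$ so that $\varphi^{+}(x)$ is near $\|\varphi^{+}\|>0$, and $0\le y$ with support in $E^{\perp}$ so that $\varphi^{-}(y)$ is near $\|\varphi^{-}\|>0$, whence $x\wedge y=0$ while $\varphi(x)>0>\varphi(y)$, contradicting $\varphi(x)\varphi(y)=0$. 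So in both directions $\varphi=\pm|\varphi|$, and we normalize to $\varphi\ge 0$, $\|\varphi\|=1$ (the case $\varphi\le 0$ being symmetric).

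\emph{Step 2: necessity.} Let $\varphi\ge 0$ be extreme and $x\wedge y=0$. Let $B$ be the band generated by $x$; $\sigma$-completeness yields $\Omega=B\oplus B^{\perp}$ with $y\in B^{\perp}$ and band projections $P_{B},P_{B^{\perp}}$. Put $\varphi_{1}=\varphi\circ P_{B}$ and $\varphi_{2}=\varphi\circ P_{B^{\perp}}$; both are positive, $\varphi=\varphi_{1}+\varphi_{2}$, and by additivity of the $AL$-norm on positives $\|\varphi_{1}\|+\|\varphi_{2}\|=1$. Extremality forces $\varphi_{1}=0$ or $\varphi_{2}=0$ (otherwise $\varphi$ is a proper convex combination of the distinct norm-one functionals $\varphi_{1}/\|\varphi_{1}\|$ and $\varphi_{2}/\|\varphi_{2}\|$). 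In the first case $\varphi(x)=\varphi_{1}(x)=0$, in the second $\varphi(y)=\varphi_{2}(y)=0$; either way $\varphi(x)\varphi(y)=0$.

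\emph{Step 3: sufficiency, and the main obstacle.} Let $\varphi\ge 0$, $\|\varphi\|=1$ with $\varphi(x)\varphi(y)=0$ whenever $x\wedge y=0$, and write $\varphi=\tfrac12(\psi_{1}+\psi_{2})$ with $\|\psi_{i}\|\le 1$. From $1=\|\varphi\|\le\tfrac12(\|\psi_{1}\|+\|\psi_{2}\|)\le 1$ we get $\|\psi_{i}\|=1$ and equality in the $AL$ triangle inequality, which, together with $\psi_{1}+\psi_{2}=2\varphi\ge 0$, forces $\psi_{1},\psi_{2}\ge 0$ (strict monotonicity of the $AL$-norm on the positive cone), hence $0\le\psi_{i}\le 2\varphi$. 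The decisive and hardest step is to show that the disjointness–multiplicativity hypothesis makes $\varphi$ behave like a lattice homomorphism, so that every positive functional dominated by $2\varphi$ is a scalar multiple of $\varphi$: if $0\le\psi\le 2\varphi$ were not proportional to $\varphi$, then inside the band of $\varphi$ the functionals $\psi\wedge(2\varphi-\psi)$ and its disjoint complement would both be nonzero, and pairing them through band projections with disjoint positive elements of $\Omega$ would produce $x\wedge y=0$ with $\varphi(x)>0$ and $\varphi(y)>0$, a contradiction. Granting this, $\psi_{i}=t_{i}\varphi$ with $t_{1}+t_{2}=2$ and $t_{i}=\|\psi_{i}\|=1$, so $\psi_{1}=\psi_{2}=\varphi$ and $\varphi$ is extreme. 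All the auxiliary ingredients — the $AL$-norm identities, the band projections from $\sigma$-completeness, and the band-splitting Proposition used in Step~1 — are available from the material above; the only genuine work is to make the abstract band argument just sketched precise without passing through a concrete $L^{1}(\mu)$ representation.
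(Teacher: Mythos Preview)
Your Steps 1 and 2 and the first half of Step 3 (reducing to $\varphi\ge 0$ and then forcing $\psi_{1},\psi_{2}\ge 0$) essentially coincide with the paper's argument, including the use of the band-splitting Proposition to kill one of $\varphi^{\pm}$. The divergence is in the final step, which you correctly flag as the crux but leave as a sketched band-theoretic argument (``$\psi\wedge(2\varphi-\psi)$ and its disjoint complement\ldots''); as written that sketch is vague and it is not clear how to turn it into a proof without more machinery.

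The paper bypasses this entirely with a short elementary trick that you are missing. Once $0\le\psi_{i}\le 2\varphi$, it suffices to show $\ker\varphi\subseteq\ker\psi_{i}$, since then $\psi_{i}=c_{i}\varphi$ and $\|\psi_{i}\|=1$ forces $c_{i}=1$. For $y\ge 0$ with $\varphi(y)=0$ this is immediate: $0\le\psi_{i}(y)$ and $\psi_{1}(y)+\psi_{2}(y)=2\varphi(y)=0$. For general $y$ with $\varphi(y)=0$, use the hypothesis once more on the disjoint pair $y^{+},y^{-}$: since $y^{+}\wedge y^{-}=0$ we get $\varphi(y^{+})\varphi(y^{-})=0$, and combined with $\varphi(y^{+})-\varphi(y^{-})=\varphi(y)=0$ this gives $\varphi(y^{+})=\varphi(y^{-})=0$; now the positive case applies to each part. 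This two-line observation replaces your entire ``decisive and hardest step,'' so the sufficiency proof is in fact no harder than the rest.
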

\begin{proof}
\emph{Necessity.} If there exists $x,y\in \Omega $ satisfying $x\wedge y=0$ but $\varphi(x)>0$ and $\varphi(y)>0$, then we set $E=y^{\perp}$, and $\Omega=E+E^{\perp}$. Let $\psi=\varphi|_{E}$ and $\tau=\varphi|_{E^{\perp}}$. Then $\|\psi\|>0,\; \|\psi\|>0$ since $x\in E,\; y\in E^{\perp}$. Therefore, $\varphi=\|\psi\|\frac{\psi}{\|\psi\|}+\|\tau\|\frac{\tau}{\|\tau\|}$ and $\|\psi\|+\|\tau\|=\|\varphi\|=1.$ Hence, $\varphi\in ext B(\Omega)$.
\\
\emph{Sufficiency.} First we show $\|\varphi^{+}\|\|\varphi^{-}\|=0$. In fact, for any $\epsilon > 0$, by Theorem \ref{thm4}, there exists two orthogonal subspaces $E,\; F\in \Omega$ such that $\Omega=E+F$ and $\|\varphi^{-}|_{E}\|< \epsilon,\;  \|\varphi^{+}|_{F}\|<\epsilon $. Choose $x\in B(\Omega)$ such that $\varphi(x)> \|\varphi\|-\epsilon$, and let $x=u+v$, where $u\in E$ and $v\in F$. Then $\varphi(u)\varphi(v)=0$ since $u\wedge v=0$. If $\varphi(v)=0$ then $\|\varphi\|-\epsilon< \varphi(x)=\varphi^{+}|_{E}(u)-\varphi^{-}|_{E}(u)\leq \|\varphi^{+}|_{E}\|+\|\varphi^{-}|_{E}\|<\|\varphi^{+}\|+\epsilon$. Let $\epsilon \rightarrow 0$, we find $\|\varphi^{-}\|=\|\varphi\|-\|\varphi^{+}\|=0$. Similarly, if $\varphi(u)=0$. Then $\|\varphi^{+}\|=0$. Hence, without loss of generality, we assume $\varphi=\varphi^{+}$. Let $\psi, \; \tau\in D(\Omega^{*})$ satisfy $2\varphi=\psi+\tau$. Then $2\varphi=(\psi^{+}+\tau^{+})-(\psi^{-}+\tau^{-})$ and hence
       $ \|2\varphi\| = \|\psi^{+}\|+\|\tau^{+}\|+\|\psi^{-}\|+\|\tau^{-}\|
        = \|\psi\|+\|\tau\|=2=\|2\varphi\|
      $
Thus $\psi^{+}+\tau^{+}=2\varphi$ and $\psi^{-}=\tau^{-}=0$.
Now we show $\psi=\tau=\varphi$, that is, $\varphi\in ext B(\Omega^{*})$. This follows if we prove that $\psi(y)=\tau(y)=0$ whenever $\varphi(y)=0$ this means $\varphi=a\psi=b\tau$, but $\varphi,\; \psi, \tau\in D(\Omega^{*})$ and $2\varphi=\psi+\tau$, so $a=b=1$, this means assume $y\geq 0$; then from $\psi(y)\geq 0, \tau(y)\geq 0$ and $\psi(y+\tau(y)=2\varphi(y)=0$. We have $\psi(y)=\tau(y)=0$. For the general case, since $\varphi(y)=0$ and by the condition given in theorem $\varphi(y^{+})\varphi (y^{-})=0$, we have $\varphi(y^{+})=\varphi (y^{-})=0$ hence the condition $\psi(y)=\tau(y)=0$ follows from the first case.

\end{proof}

\section{Norm-attainability for non-power operators}
This section deals with norm-attainability for usual Hilbert space operators. To avoid ambiguity and without loss of generality, we use the term "non-power" to differentiate these operators from those in the next section. The first result is a proof of norm-attainability condition for compact self adjoint operators in a dense separable infinite dimensional complex Hilbert space.
\begin{theorem} Let $H$ be a dense separable infinite dimensional complex Hilbert space. Let $T: H\rightarrow H$ be a compact and self-adjoint operator on Hilbert space $H$. Let $D$ be unit sphere in $H$. Then there exists a vector $x_{0}\in D$ such that $\|Tx_{0}\|=\|T\|$.
\end{theorem}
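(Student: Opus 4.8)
The plan is to establish that a compact self-adjoint operator attains its norm on the unit sphere by exhibiting an explicit maximizing vector built from a norm-approximating sequence and using compactness to extract a convergent subsequence. First I would recall that for a self-adjoint operator $T$ one has the variational formula $\|T\| = \sup_{\|x\|=1} |\langle Tx, x\rangle|$; this is the standard fact that for self-adjoint operators the numerical radius equals the operator norm, which reduces the problem from controlling $\|Tx\|$ to controlling the scalar quantity $\langle Tx, x\rangle$. By passing to $-T$ if necessary, I may assume $\|T\| = \sup_{\|x\|=1} \langle Tx, x\rangle =: \lambda$, so there is a sequence of unit vectors $(x_n)$ with $\langle Tx_n, x_n\rangle \to \lambda$.

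Next I would run the classical convexity estimate: expanding $\|Tx_n - \lambda x_n\|^2 = \|Tx_n\|^2 - 2\lambda \langle Tx_n, x_n\rangle + \lambda^2 \le \|T\|^2 - 2\lambda\langle Tx_n, x_n\rangle + \lambda^2 = 2\lambda^2 - 2\lambda\langle Tx_n, x_n\rangle \to 0$. Hence $Tx_n - \lambda x_n \to 0$ in norm. Now compactness enters: since $(x_n)$ is a bounded sequence in $H$ and $T$ is compact, the sequence $(Tx_n)$ has a norm-convergent subsequence, say $Tx_{n_k} \to y$ for some $y \in H$. Assuming $\lambda \ne 0$ (the case $\lambda = 0$ means $T = 0$ and any unit vector works), it follows from $\lambda x_{n_k} = Tx_{n_k} - (Tx_{n_k} - \lambda x_{n_k}) \to y - 0 = y$ that $x_{n_k} \to x_0 := y/\lambda$. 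Continuity of the norm gives $\|x_0\| = \lim \|x_{n_k}\| = 1$, so $x_0 \in D$, and continuity of $T$ together with $Tx_n - \lambda x_n \to 0$ yields $Tx_0 = \lambda x_0$, whence $\|Tx_0\| = |\lambda| = \|T\|$.

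The main obstacle — really the only substantive point — is justifying the variational identity $\|T\| = \sup_{\|x\| = 1}|\langle Tx, x\rangle|$ for self-adjoint $T$; everything downstream is routine. I would prove it via the polarization-type identity: for real $t$, $4\,\mathrm{Re}\langle Tx, z\rangle = \langle T(x+z), x+z\rangle - \langle T(x-z), x-z\rangle \le M(\|x+z\|^2 + \|x-z\|^2) = 2M(\|x\|^2 + \|z\|^2)$, where $M := \sup_{\|u\|=1}|\langle Tu,u\rangle|$, using that $\langle Tu, u\rangle$ is real for self-adjoint $T$. Optimizing over the phase of $z$ and taking $\|x\| = \|z\| = 1$ gives $|\langle Tx, z\rangle| \le M$ for all unit $x, z$, hence $\|T\| = \sup_{\|x\|=\|z\|=1}|\langle Tx, z\rangle| \le M$; the reverse inequality $M \le \|T\|$ is immediate from Cauchy--Schwarz. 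With this lemma in hand the argument above is complete.
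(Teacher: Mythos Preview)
Your proof is correct and follows the classical Riesz route to the spectral theorem for compact self-adjoint operators: the numerical-radius identity $\|T\| = \sup_{\|x\|=1}|\langle Tx,x\rangle|$ reduces matters to a scalar maximization, the estimate $\|Tx_n - \lambda x_n\|^2 \to 0$ exhibits $(x_n)$ as an approximate eigenvector sequence, and compactness upgrades this to a genuine eigenvector $x_0$ with $Tx_0 = \lambda x_0$. The paper takes a genuinely different path: it starts from a sequence $(x_n)$ of unit vectors with $\|Tx_n\| \to \|T\|$, uses compactness of $T$ to pass to a norm limit $y_0 = \lim Tx_{n}$, then invokes weak sequential compactness of the unit ball (via reflexivity and separability of the closed linear span of the $x_n$) to obtain a weak limit $x_0$ of a further subsequence, and uses self-adjointness only in the form $\langle Tx_0, z\rangle = \langle x_0, Tz\rangle$ to identify $Tx_0 = y_0$; it then argues that $\|x_0\| \le 1$ by weak lower semicontinuity of the norm and rules out $\|x_0\| < 1$ since that would force $\|Tx_0\| < \|T\|$. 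Your argument is more elementary (no weak topology is needed), delivers the stronger conclusion that the maximizer is an eigenvector with eigenvalue $\pm\|T\|$, and makes the role of self-adjointness quantitative through the polarization lemma; the paper's weak-compactness approach is closer in spirit to the general statement that any compact operator on a reflexive space attains its norm, with self-adjointness used only incidentally to transport the weak limit through $T$.
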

\begin{proof} From an analogy of the definition of the usual operator norm, we know that $\|T\|=\sup_{x\in D}\|Tx\|$, so there exists a sequence of elements  $x_{1},x_{2},...\in D$ such that $\lim_{n\rightarrow \infty}\|Tx_{n}\|=\|T\|$. Since $T$ let
$
y_{0}=\lim_{n\rightarrow \infty}Tx_{n}\; \text{exists in}\; H.
$
Let $Y=\overline{Span\{x_{1}, \;x_{2},...\}}$. Then it is a closed subspace of $H$, hence a Hilbert space. So, $Y$ is reflexive and separable. Thus, there exists a subsequence of $(x_{n})$, say $(x_{j_{1}},x_{j_{2}},x_{j_{3}},...)$  with $1\leq j_{1}<j_{3}<...,$ such that $x_{j_{k}}$ converge weakly in $Y$to some point $x_{0}\in Y$. Now, for each $z\in Y$,
    $\langle Tx_{0}, z\rangle=\langle x_{0}, Tx\rangle=\lim_{k\rightarrow \infty}\langle x_{j_{k}}, Tz \rangle=\lim_{k\rightarrow}\langle Tx_{x_{k}},z \rangle= \langle y_{0},z\rangle.
$
Now, the second equality holds since $x_{j_{k}}$ converge weakly to $x_{0}$. Indeed, $x\mapsto \langle x, Tz\rangle$ is a bounded linear functional on $Y,$ and the last equality holds because we know from the fact that $y_{0}=\lim_{n\rightarrow \infty}Tx_{n}\; \text{exists in}\; H.$  So, $\lim_{k\rightarrow \infty}Tx_{j_{k}}=y_{0}$ exists and $\langle., .\rangle$ is continuous. Since $\langle Tx_{0}, z\rangle=\langle x_{0}, Tx\rangle=\lim_{k\rightarrow \infty}\langle x_{j_{k}}, Tz \rangle=\lim_{k\rightarrow}\langle Tx_{x_{k}},z \rangle= \langle y_{0},z\rangle$ holds for $z\in Y,$ we conclude that $Tx_{0}=y_{0}$. Hence, $\|Tx_{0}\|=\|y_{0}\|=\|\lim_{k\rightarrow \infty}T(x_{n})\|=lim_{k\rightarrow \infty}\|T(x_{n})\|=\|T\|$. Now, since $x_{j_{k}}$ converge weakly to $x_{0}$ we have $\langle x_{0}, x_{0}\rangle=\lim_{k\rightarrow \infty}\langle x_{j_{k}}, x_{0}\rangle$ and $|\langle x_{j_{k}}, x_{0}\rangle|\leq \|x_{j_{k}}\|.\|x_{0}\|=1$ for all $k$, hence $\|x_{0}\|\leq 1$. We cannot have $\|x_{0}\|<1$ since then $\|Tx_{0}\|\leq \|T\|.\|x_{0}\|<\|T\|$ which is a contradiction. Hence $\|x_{0}\|=1$ i.e $x_{0}\in D$. Hence, the existence of  $x_{0}$ is proved as required.
\end{proof}
\begin{corollary} Let $H$ be a dense separable infinite dimensional complex Hilbert space. Let $T: H\rightarrow H$ be a compact and self-adjoint operator on Hilbert space $H$ and let $Y\subset H$ be a subspace such that $T(Y)\subset Y$. Let $T(Y^{\perp})\subset Y^{\perp}$ be  a subspace such that $T(Y)\subset Y$. Then $T(Y^{\perp})\subset Y^{\perp}$, and $T|_{Y^{\perp}}: Y^{\perp}\rightarrow Y^{\perp}$ is a bounded self-adjoint linear operator on Hilbert space $Y^{\perp}$, with the norm $\|T|_{Y^{\perp}}\|\leq \|T\|$.
\end{corollary}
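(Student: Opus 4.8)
The plan is to verify the three assertions in turn. None of them uses compactness in an essential way beyond the trivial fact that compactness, like boundedness, is inherited by restrictions; the whole argument rests on self-adjointness together with the invariance hypothesis $T(Y)\subset Y$.

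First I would establish the invariance of the orthogonal complement. Fix $x\in Y^{\perp}$; to conclude $Tx\in Y^{\perp}$ it suffices to show $\langle Tx,y\rangle=0$ for every $y\in Y$. Since $T=T^{*}$ we have $\langle Tx,y\rangle=\langle x,Ty\rangle$, and $Ty\in Y$ by hypothesis while $x\perp Y$, so this inner product vanishes. Hence $T(Y^{\perp})\subset Y^{\perp}$, which also makes $T|_{Y^{\perp}}$ well defined as a map $Y^{\perp}\to Y^{\perp}$.

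Next I would note that, taking $Y$ closed so that $Y^{\perp}$ is a closed subspace of $H$ and hence a Hilbert space in its own right, the restriction $T|_{Y^{\perp}}$ is linear and bounded as the restriction of the bounded linear operator $T$, and is self-adjoint on $Y^{\perp}$: for $x,z\in Y^{\perp}$ one has $\langle T|_{Y^{\perp}}x,z\rangle=\langle Tx,z\rangle=\langle x,Tz\rangle=\langle x,T|_{Y^{\perp}}z\rangle$, the inner products being those of $H$ restricted to $Y^{\perp}$.

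Finally, the norm inequality is immediate from the description of the operator norm as a supremum over the unit sphere:
\[
\|T|_{Y^{\perp}}\|=\sup_{x\in Y^{\perp},\ \|x\|=1}\|Tx\|\le\sup_{x\in H,\ \|x\|=1}\|Tx\|=\|T\|,
\]
since the left-hand supremum ranges over a subset of the sphere appearing on the right. I do not anticipate a genuine obstacle here; the only step needing a little care is the first one, where one must remember to use $T=T^{*}$ in order to move $T$ onto the $Y$-side of the inner product and thereby exploit $T(Y)\subset Y$.
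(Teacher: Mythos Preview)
Your proof is correct and follows essentially the same route as the paper: the key step---showing $T(Y^{\perp})\subset Y^{\perp}$ by moving $T$ across the inner product via self-adjointness and then invoking $T(Y)\subset Y$---is identical. In fact you are more thorough than the paper, which proves only this invariance step and leaves the self-adjointness of the restriction and the norm bound implicit; your explicit verification of these two remaining assertions is straightforward and correct.
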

\begin{proof} Choose $z\in Y^{\perp}$. Now, for each $y\in Y, \langle Tz, y\rangle= \langle z, Ty\rangle=0$ since $Ty\in T(Y)\subset Y$ and $z\in Y^{\perp}$. Clearly, $Tz\in Y^{\perp}$. Hence, $T(Y^{\perp})\subset Y^{\perp}$.
\end{proof}

The next proposition in this section is a very important refined result for a characterization of operators in $B(H)$  whose analogies can be found in \cite{Oke2} Theorem 2.1 or \cite{Oke4} Proposition 2.1. Since this is a refinement from the stated earlier results, we include the proof for completion.
\begin{lemma}\label{pro1}
  Let $H$ be a reflexive, dense, separable, infinite dimensional complex Hilbert space. Let $S\in B(H),\;\beta\in W_{0}(S)$ and $\alpha >0$ be given. Then the there exists a self-adjoint compact operator $Z\in B(H) $ such that $\|S\|=\|Z\|,$ with $\|S-Z\|<\alpha$. Furthermore,
  there exists a unit vector $\eta\in H$ such that $\|Z\eta\|=\|Z\|$ with $\langle Z\eta,\eta\rangle =\beta.$
\end{lemma}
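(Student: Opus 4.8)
\emph{Proof proposal.}
The plan is to run the perturbation scheme behind Theorem~2.1 of \cite{Oke2} and Proposition~2.1 of \cite{Oke4}, but with a correction term that is \emph{finite rank and self-adjoint}, so that $Z$ ends up compact and self-adjoint. Dividing by $\|S\|$ we may assume $\|S\|=1$ (the case $S=0$ being trivial). Since $\beta\in W_0(S)$, fix unit vectors $(x_n)\subset H$ with $\|Sx_n\|\to 1$ and $\langle Sx_n,x_n\rangle\to\beta$; because $H$ is reflexive and separable we may, passing to a subsequence exactly as in the first theorem of this section, assume $x_n\rightharpoonup\eta$ weakly for some $\eta\in H$. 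Observe also that $|\beta|\le 1$, since $W_0(S)$ is contained in the closed disc of radius $\|S\|$: indeed $|\langle Sx_n,x_n\rangle|\le\|Sx_n\|\to\|S\|$.

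Next I would fix $n$ so large that $\bigl|\,1-\|Sx_n\|\,\bigr|<\alpha/3$ and $|\beta-\langle Sx_n,x_n\rangle|<\alpha/3$, set $M_n=\operatorname{span}\{x_n,Sx_n\}$ (dimension at most two, orthogonal projection $P_n$), and in the $2\times2$ matrix model of the compression $P_nSP_n$ solve for a self-adjoint operator $R_n$ on $M_n$ with $\langle R_nx_n,x_n\rangle=\beta$, $\|R_nx_n\|=1=\|R_n\|$, and $\|(R_n-P_nSP_n)|_{M_n}\|<\alpha/3$, the last bound being forced by the two displayed estimates together with $|\beta|\le 1$. Defining $Z=S+P_n(R_n-P_nSP_n)P_n$ then produces a self-adjoint correction of rank at most two, hence (in the compact self-adjoint class around which this section is built) a compact self-adjoint $Z$ with $\|S-Z\|<\alpha$; one then checks that $\|Z\|=1=\|S\|$ is attained at $\eta:=x_n$ with $\langle Z\eta,\eta\rangle=\beta$, the equality $\|Z\|=\|S\|$ (rather than merely $\|Z\|\le\|S\|+\alpha$) coming from the fact that $M_n$ is precisely where $S$ is within $\alpha$ of achieving its norm, so the correction neither raises $\|\cdot\|$ above $1$ nor lowers it below $1$. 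The one genuinely new computation is the $2\times2$ linear-algebra step producing $R_n$: writing the compressed matrix in the orthonormal basis $\{x_n,u_n\}$, where $u_n$ is the normalized component of $Sx_n$ orthogonal to $x_n$, and adjusting the diagonal $(x_n,x_n)$-entry to $\beta$ and the first column to norm $1$, one reads off that the total change has norm below $\alpha/3$.

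The step I expect to be the real obstacle is keeping $Z$ compact while insisting $\|S-Z\|<\alpha$ for arbitrary $S\in B(H)$: a finite-rank self-adjoint perturbation preserves compactness only when $S$ is itself compact, so the statement is best read with $S$ in the compact self-adjoint class studied in the preceding theorem and corollary of this section, or else ``compact'' must be suitably weakened; this is the only place where ``$Z$ compact'' is not immediate from the construction. The secondary difficulty is self-adjointness of the correction, which forces simultaneous adjustments of the compressed matrix in two coordinates — one to pin $\langle Z\eta,\eta\rangle=\beta$, one to pin $\|Z\eta\|=\|Z\|$ — and one must verify each stays below $\alpha/3$, which is exactly where $|\beta|\le 1$ and the choice of $n$ are used. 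Everything else reduces to the weak-compactness bookkeeping already carried out in the first theorem of the section.
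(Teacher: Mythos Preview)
Your approach is genuinely different from the paper's. The paper does not perturb $S$ by a finite-rank block on $\operatorname{span}\{x_n,Sx_n\}$; instead it takes the polar decomposition $S=GL$, writes the spectral resolution $L=\int_0^1\mu\,dE_\mu$, replaces $L$ by $J=LE(\gamma)\oplus E(\rho)$ (identity on the top spectral slice $\rho=(1-\alpha/2,1]$), and then builds a contraction $V$ with $\|V-G\|<\alpha/2$ by an explicit two-vector construction around a suitably chosen $x_M$. The output is $Z=VJ$, and the exact equality $\|Z\|=\|S\|$ follows because $\|J\|=1$, $V$ is a contraction, and $\|Zx_M\|=\|Vx_M\|=\|q_M\|=1$. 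Your scheme gives only $1\le\|Z\|<1+\alpha/3$, not $\|Z\|=\|S\|$; the sentence ``the correction neither raises $\|\cdot\|$ above $1$ nor lowers it below $1$'' is asserted, not proved, and in fact a rank-two self-adjoint bump of size $\alpha/3$ can push the global norm strictly above $1$. This is a genuine gap that the paper's factorization $Z=VJ$ avoids.

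You are, however, right to flag the ``compact self-adjoint'' clause: the paper's own construction $Z=VJ$ is neither compact nor self-adjoint in general (nothing in the proof addresses either property), so the adjectives in the statement are not supported by the argument actually given. Your reading that the claim only makes sense if $S$ is already restricted to the compact self-adjoint class is the correct diagnosis. Note also a second obstruction you do not mention: if $Z$ is self-adjoint then $\langle Z\eta,\eta\rangle\in\mathbb{R}$, so the conclusion $\langle Z\eta,\eta\rangle=\beta$ is impossible for non-real $\beta\in W_0(S)$; your own $2\times2$ step ``$\langle R_nx_n,x_n\rangle=\beta$ with $R_n$ self-adjoint'' already presupposes $\beta\in\mathbb{R}$. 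So even within the compact self-adjoint class one must further assume $\beta$ real, and your $2\times2$ solve then forces the eigenvalues of $R_n$ to be $\pm1$, which in turn constrains how close $R_n$ can be to the compressed block---the bound $\|R_n-P_nSP_n\|<\alpha/3$ does not follow from the two scalar estimates alone.
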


  \begin{proof}
Let $\|S\|=1$ and also that $0<\alpha < 2.$ Let $x_{n}\in H\;(n=1,2,...)$ be such that $\|x_{n}\|=1,$  $\|Sx_{n}\|\rightarrow 1$ and also $\lim_{n\rightarrow\infty}\langle Sx_{n},x_{n}\rangle =\beta.$  Let $S=GL$ be the polar decomposition of $S.$ Here $G$ is a partial isometry and we write $L=\int_{0}^{1}\beta dE_{\beta},$ the spectral decomposition of $L=(S^{*}S)^{\frac{1}{2}}.$ Since $L$ is a positive operator with norm $1,$  for any $x\in H$ we have that $\|Lx_{n}\|\rightarrow 1$ as $n$ tends to $\infty$ and $\lim_{n\rightarrow\infty}\langle Sx_{n},x_{n}\rangle =\lim_{n\rightarrow\infty}\langle GLx_{n},x_{n}\rangle =\lim_{n\rightarrow\infty}\langle Lx_{n},G^{*}x_{n}\rangle.$ Now for $H=\overline{Ran(L)}\oplus KerL,$ we can choose $x_{n}$ such that $x_{n}\in \overline{Ran(L)}$ for large $n.$ Indeed, let $x_{n}=x_{n}^{(1)}\oplus x_{n}^{(2)},\; n=1,2,...$ Then we have that $Lx_{n}=Lx_{n}^{(1)}\oplus Lx_{n}^{(2)}=Lx_{n}^{(1)}$ and that $\lim_{n\rightarrow\infty}\|x_{n}^{(1)}\|=1,\;\lim_{n\rightarrow\infty}\|x_{n}^{(2)}\|=0$ since $\lim_{n\rightarrow\infty}\|Lx_{n}\|=1.$ Replacing $x_{n}$ with $\frac{x_{n}^{(1)}}{\|x_{n}^{(1)}\|},$ we get
$\lim_{n\rightarrow\infty}\left\|L\frac{1}{\|x_{n}^{(1)}\|}x_{n}^{(1)}\right\|=
\lim_{n\rightarrow\infty}\left\|S\frac{1}{\|x_{n}^{(1)}\|}x_{n}^{(1)}\right\|=1,$
$\lim_{n\rightarrow\infty}\left\langle S\frac{1}{\|x_{n}^{(1)}\|}x_{n}^{(1)},
\frac{1}{\|x_{n}^{(1)}\|}x_{n}^{(1)}\right\rangle =\beta
.$
 Next let $x_{n}\in \overline{RanL}.$ Since $G$ is a partial isometry from $ \overline{RanL}$ onto $ \overline{RanS},$ we have that $\|Gx_{n}\|=1$ and $\lim_{n\rightarrow\infty}\langle Lx_{n},G^{*}x_{n}\rangle=\beta.$ Since $L$ is a positive operator, $\|L\|=1$ and for any $x\in H,$ $\langle Lx,x\rangle \leq \langle x,x\rangle =\|x\|^{2}.$ Replacing $x$ with $L^{\frac{1}{2}}x$, we get that $\langle L^{2}x,x\rangle \leq \langle Lx,x\rangle,$ where $L^{\frac{1}{2}}$ is the positive square root of $L.$ Therefore we have that $\|Lx\|^{2}=\langle Lx,Lx\rangle \leq \langle Lx,x\rangle.$ It is obvious that $\lim_{n\rightarrow\infty}\|Lx_{n}\|=1$ and that $\|Lx_{n}\|^{2}\leq\langle Lx_{n},x_{n}\rangle \leq \|Lx_{n}\|^{2}=1.$ Hence,
$\lim_{n\rightarrow\infty}\langle Lx_{n},x_{n}\rangle =1=\|L\|.$ Moreover, Since $I-L\geq 0,$ we have $\lim_{n\rightarrow\infty}\langle (I-L)x_{n},x_{n}\rangle =0.$ thus $\lim_{n\rightarrow\infty}\| (I-L)^{\frac{1}{2}}x_{n}\| =0.$ Indeed, $\lim_{n\rightarrow\infty}\| (I-L)x_{n}\| \leq\lim_{n\rightarrow\infty}\| (I-L)^{\frac{1}{2}}\|.\| (I-L)^{\frac{1}{2}}x_{n}\| =0.$ For  $\alpha >0,$ let $\gamma =[0,1-\frac{\alpha}{2}]$ and let $\rho =(1-\frac{\alpha}{2},1].$ We have
$ L = \int_{\gamma}\mu dE_{\mu}+\int_{\rho}\mu dE_{\mu}
   = LE(\gamma) \oplus LE(\rho).$
Next we show that $\lim_{n\rightarrow\infty}\|E(\gamma)x_{n}\| =0.$ If there exists a subsequence $x_{n_{i}}, (i=1,2,...,)$ such that $\|E(\gamma)x_{n_{i}}\| \geq\epsilon > 0,\;(i=1,2,...,)$, then since $\lim_{i\rightarrow\infty} \|x_{n_{i}}-Lx_{n_{i}}\| =0,$ it follows that
 $  \lim_{i\rightarrow\infty} \|x_{n_{i}}-Lx_{n_{i}}\|^{2} =
       \lim_{i\rightarrow\infty} (\|E(\gamma)x_{n_{i}}-LE(\gamma)x_{n_{i}}\|^{2}
       +\|E(\rho)x_{n_{i}}-LE(\rho)x_{n_{i}}\|^{2})
        = 0.
   $
Hence, we have that $\lim_{i\rightarrow\infty} \|E(\gamma)x_{n_{i}}-LE(\gamma)x_{n_{i}}\|^{2} =0.$ Now, it is clear that
$\|E(\gamma)x_{n_{i}}-LE(\gamma)x_{n_{i}}\| \geq \|E(\gamma)x_{n_{i}}\|-\|LE(\gamma)\|.\|E(\gamma)x_{n_{i}}\|
  \geq (I-\|LE(\gamma)\|)\|E(\gamma)x_{n_{i}}\|
  \geq \frac{\alpha}{2} \epsilon
  > 0.
$
\noindent This is a contradiction. Therefore, $\lim_{n\rightarrow\infty}\|E(\gamma)x_{n}\| =0.$ Since $\lim_{n\rightarrow\infty}\langle Lx_{n},x_{n}\rangle =1,$ we have that
$\lim_{n\rightarrow\infty}\langle LE(\rho)x_{n},E(\rho)x_{n}\rangle =1$ and
$\lim_{n\rightarrow\infty}\langle E(\rho)x_{n},G^{*}E(\rho)x_{n}\rangle =\beta.$\\
\noindent It is easy to see that $\lim_{n\rightarrow\infty}\| E(\rho)x_{n}\| =1,\;\lim_{n\rightarrow\infty}\left\langle L\frac{E(\rho)x_{n}}{\|E(\rho)x_{n}\|},\frac{E(\rho)x_{n}}{\|E(\rho)x_{n}\|} \right\rangle =1$ and
$\lim_{n\rightarrow\infty}\left\langle L\frac{E(\rho)x_{n}}{\|E(\rho)x_{n}\|},G^{*}\frac{E(\rho)x_{n}}{\|E(\rho)x_{n}\|} \right\rangle =\beta$
Replacing $x$ with $\frac{E(\rho)x_{n}}{\|E(\rho)x_{n}\|},$ we can assume that $x_{n}\in E(\rho)H$ for each $n$ and $\|x_{n}\|=1.$ Let
$J =\int_{\gamma}\mu dE_{\mu}+\int_{\rho}\mu dE_{\mu}
   = J_{1} \oplus E(\rho).
$
Then it is evident that $\|J\|=\|S\|=\|L\|=1, Jx_{n}=x_{n},$ and $\|J-L\|\leq\frac{\alpha}{2}.$
 If we can find a contraction $V$ such that $\|V-G\|\leq\frac{\alpha}{2}$ and $\|Vx_{n}\|=1$ and $\langle Vx_{n},x_{n}\rangle =\beta,$ for a large $n$ then letting $Z=VJ$, we have that $\|Zx_{n}\|=\|VJx_{n}\|=1,$ and that $\langle Zx_{n},x_{n}\rangle =\langle VJx_{n},x_{n}\rangle=\langle Vx_{n},x_{n}\rangle =\beta,$
$ \|S-Z\| = \|GL-VJ\|
   \leq \|GL-GJ\|+\|GJ-VJ\|
   \leq \|G\|\cdot\|L-J\|+\|G-V\|\cdot\|J\|
   \leq \frac{\alpha}{2}+\frac{\alpha}{2}
   = \alpha .
$
Lastly, we now construct the desired contraction $V$.
Clearly, $\lim_{n\rightarrow\infty}\langle x_{n},G^{*}x_{n}\rangle =\beta,$ because
$\lim_{n\rightarrow\infty}\langle L x_{n},G^{*}x_{n}\rangle =\beta$ and
$\lim_{n\rightarrow\infty}\| x_{n}-Lx_{n}\| =0.$ Let $Gx_{n}=\phi_{n}x_{n} +\varphi_{n}y_{n},\;\;(y_{n}\bot x_{n},\;\|y_{n}\|=1)$ then $\lim_{n\rightarrow\infty}\phi_{n} =\beta,$ because $\lim_{n\rightarrow\infty}\langle  Gx_{n},x_{n}\rangle =\lim_{n\rightarrow\infty}\langle  x_{n},G^{*}x_{n}\rangle =\beta$ but $\|Gx_{n}\|^{2}=|\phi_{n}|^{2}+|\varphi_{n}|^{2}=1,$ so we have that
$\lim_{n\rightarrow\infty}|\varphi_{n}| =\sqrt{1-|\beta}|^{2}.$ Now
 without loss of generality, there exists an integer $M$ such that $|\phi_{M}-\beta|<\frac{\alpha}{8}.$ Choose $\varphi_{M}^{0}$ such that $|\varphi_{M}^{0}| =\sqrt{1-|\beta}|^{2},\;\;|\varphi_{M}-\varphi_{M}^{0}|<\frac{\alpha}{8}.$ We have that
 $  Gx_{M} = \phi_{M}x_{M}+\varphi_{M}y_{M} -\beta x_{M}+\beta x_{M}-\varphi_{M}^{0}y_{M}+\varphi_{M}^{0}y_{M}
        = (\phi -\beta)x_{M} +(\varphi_{M}-\varphi_{M}^{0})y_{M}+\beta x_{M}+\varphi_{M}^{0}y_{M}.
   $
Let $q_{M}=\beta x_{M}+\varphi_{M}^{0}y_{M},$ $ Gx_{M}=(\phi -\beta)x_{M} +(\varphi_{M}-\varphi_{M}^{0})y_{M}+q_{M}.$ Suppose that $y\bot x_{M},$ then
$\langle Gx_{M}, Gy\rangle = (\phi -\beta)\langle x_{M}, Gy\rangle+(\varphi_{M}-\varphi_{M}^{0})\langle y_{M}, Gy\rangle +\langle q_{M}, Gy\rangle
   = 0,
$
because $G^{*}G$ is a projection from $H$ to $RanL.$ It follows that
$|\langle q_{M}, Gy\rangle|\leq |\phi_{M}-\beta|.\|y\| + |\varphi_{M}-\varphi_{M}^{0}|.\|y\|\leq \frac{\alpha}{4}\|y\|.$ If we suppose that
$Gy=\phi q_{M} + y^{0}, \; (y^{0}\bot q_{M},)$ then $y^{0}$ is uniquely determined by $y.$ Hence we can define $V$ as follows $V:x_{M}\rightarrow q_{M},\;y\rightarrow y^{0},\;\phi x_{M}+\varphi_{M}y \rightarrow \phi q_{M}+\varphi_{M}y^{0},$ with both $\phi ,\varphi$ being complex numbers. $V$ is a linear operator. We prove that $V$ is a contraction. Now, $\|Vx_{M}\|^{2}=\|q_{M}\|^{2}=|\beta|^{2}=|\varphi_{M}^{0}|^{2}=1,$
$\|Vy\|^{2}=\|Gy\|^{2}-|\phi y|^{2}\leq\|Gy\|^{2}\leq \|y\|^{2}.$ It follows that $\|V\phi\|^{2}=\|\phi\|^{2}\|Vx_{M}\|^{2}+|\varphi |^{2}\|Vy\|^{2}\leq|\phi|^{2}+ |\varphi|^{2}=1,$ for each $x\in H$ satisfying that $x=\phi x_{M}+\varphi_{M}y,\;\;\|x\|=1,\;x_{M}\bot y,$ which is equivalent to that $V$ is a contraction. From the definition of $V$, we can show that
$\|Gx_{M}-Vx_{M}\|^{2}=|\phi -\beta|^{2} +|\varphi_{M}-\varphi_{M}^{0}|^{2}\leq \frac{2\alpha^{2}}{16}=\frac{1}{8}\alpha^{2}.$
If $y\bot x_{M},\; \|y\|\leq 1$ then obtain
$\|Gy-Vy\|=|\phi|\|Vx_{M}\|=|\langle Gy,Vx_{M}\rangle|= |\langle q_{M},Gy\rangle|< \frac{\alpha}{4}.$
Hence for any $x\in H,\;x=\phi x_{M}+\varphi_{M}y,\;\;\|x\|=1,$
$ \|Gx-Vx\|^{2} = \|\phi(G-V)x_{M}+\varphi(G-V)y\|^{2}
   = |\phi|^{2}\|(G-V)x_{M}\|^{2}+|\varphi|^{2}\|(G-V)y\|^{2}
    < |\phi|^{2}.\frac{\alpha^{2}}{16}+|\varphi|^{2}.\frac{\alpha^{2}}{16}
    < \frac{\alpha^{2}}{8},
$
which implies that $\|(G-V)x\|<\frac{\alpha}{2},\;\;\|x\|=1,$ and hence $\|(G-V)\|<\frac{\alpha}{2}.$ Let $Z=VJ$. Then $Z$ is what we desire and this completes the proof.
\end{proof}
The following theorem  in \cite{Oke1}  is useful in the sequel. We state it but we omit the proof.

\begin{theorem}\label{Adj}
For $A\in B(H),$ $A$ is norm-attainable if and only if its adjoint is norm-attainable.
\end{theorem}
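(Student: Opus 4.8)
The plan is to exploit the elementary identity $\|A\|=\|A^{*}\|$ together with the observation that a unit vector attaining the norm of $A$ automatically attains the norm of the positive operator $A^{*}A$. By symmetry, using $A^{**}=A$, it suffices to prove a single implication, so I would assume $A$ is norm-attainable and deduce the same for $A^{*}$. The case $A=0$ is trivial, since then $\|A\|=\|A^{*}\|=0$ and every unit vector works; hence I would assume $A\neq 0$ and, after replacing $A$ by $A/\|A\|$ (norm-attainability is invariant under nonzero scalar multiples), that $\|A\|=1$.

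First I would invoke Definition \ref{NA} to pick a unit vector $x_{0}\in H$ with $\|Ax_{0}\|=\|A\|=1$. Then
\[
\langle A^{*}Ax_{0},x_{0}\rangle=\|Ax_{0}\|^{2}=1=\|A\|^{2}=\|A^{*}A\|,
\]
the last equality being the $C^{*}$-identity. Since $A^{*}A$ is positive, the Cauchy--Schwarz estimate
\[
\|A^{*}A\|=\langle A^{*}Ax_{0},x_{0}\rangle\leq\|A^{*}Ax_{0}\|\,\|x_{0}\|=\|A^{*}Ax_{0}\|\leq\|A^{*}A\|
\]
forces $\|A^{*}Ax_{0}\|=\|A^{*}A\|=1$; that is, $x_{0}$ is in fact a norm-attaining vector for $A^{*}A$.

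Next I would set $\eta_{0}=Ax_{0}$, which is a unit vector because $\|Ax_{0}\|=1$, and compute
\[
\|A^{*}\eta_{0}\|=\|A^{*}Ax_{0}\|=1=\|A\|=\|A^{*}\|.
\]
Thus $A^{*}$ is norm-attainable at $\eta_{0}$. Applying this to $A^{*}$ in place of $A$ and using $A^{**}=A$ yields the reverse implication, which completes the proof.

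I do not expect a genuine obstacle; the only points requiring care are the reduction to $\|A\|=1$ (and the separate trivial treatment of $A=0$) and the key step that norm-attainment for $A$ propagates to the positive operator $A^{*}A$ through the numerical-range inequality, after which the transported vector $\eta_{0}=Ax_{0}/\|Ax_{0}\|$ does the job for $A^{*}$.
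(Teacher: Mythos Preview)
Your argument is correct. The paper itself does not prove this theorem: it explicitly states the result and omits the proof, referring instead to the external reference \cite{Oke1}. Hence there is no in-paper proof to compare against, and your self-contained argument via the $C^{*}$-identity $\|A^{*}A\|=\|A\|^{2}$ together with Cauchy--Schwarz, transporting the attaining vector $x_{0}$ to $\eta_{0}=Ax_{0}$, is a clean and standard way to fill the gap.
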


\begin{remark}
Norm-attainable operators can be expressed in terms of scalar operators or can undergo perturbations but still remain norm-attainable. For instance, if $A, B\in B(H)$ are norm-attainable then
$J=A+iB,$ $A+B$, $A-B$, $\lambda A$, $AI$ are norm-attainable where $\lambda$ is a scalar operator.
\end{remark}

\section{ Norm-attainability for elementary operators}
In this section, we discuss the notion  of norm-attainability  for elementary  operators in Definition \ref{ELEM}.

\begin{proposition}
 Let $H$ be a reflexive, dense, separable, infinite dimensional complex Hilbert space  and $S\in B(H).$  $\delta_{S}$ is norm-attainable if there exists a unit vector $\zeta\in H$ such that $\|S\zeta\|=\|S\|,\;\;\langle S\zeta,\zeta\rangle =0.$
\end{proposition}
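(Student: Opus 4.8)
The plan is to reduce the statement to the refined perturbation result of Lemma~\ref{pro1} applied to the derivation $\delta_{S}=L_{S}-R_{S}$ acting on $B(H)$, together with the standard lower bound $\|\delta_{S}\|\geq 2\,\mathrm{dist}(S,\mathbb{C}I)$. First I would recall that for $A\in B(H)$ one has $\|\delta_{A}\|=2\inf_{\lambda\in\mathbb{C}}\|A-\lambda I\|$, so that producing a unit vector $\zeta$ with $\|S\zeta\|=\|S\|$ and $\langle S\zeta,\zeta\rangle=0$ means precisely that $0\in W_{0}(S)$ and $\|S\|$ is attained; in particular $\|S\|=\mathrm{dist}(S,\mathbb{C}I)$ up to the normalisation used in the hypothesis (after replacing $S$ by $S-\lambda I$ for the optimal $\lambda$, which does not change $\delta_{S}$). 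The key point is then to exhibit a norm-one element $X_{0}\in B(H)$ at which $\delta_{S}$ attains its norm.

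The main construction: using the unit vector $\zeta$ with $\|S\zeta\|=\|S\|$ and $\langle S\zeta,\zeta\rangle=0$, set $\eta=S\zeta/\|S\zeta\|$, which is a unit vector orthogonal to $\zeta$ since $\langle S\zeta,\zeta\rangle=0$. Define the rank-one operator $X_{0}=\zeta\otimes\eta$ (i.e.\ $X_{0}x=\langle x,\eta\rangle\zeta$), so $\|X_{0}\|=1$. Then one computes $\delta_{S}(X_{0})=SX_{0}-X_{0}S$, and evaluating $\delta_{S}(X_{0})$ at the vector $\eta$ gives $SX_{0}\eta-X_{0}S\eta=S\zeta-\langle S\eta,\eta\rangle\zeta$. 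Because $\langle S\zeta,\zeta\rangle=0$, the two terms $S\zeta$ (of norm $\|S\|$) and $\langle S\eta,\eta\rangle\zeta$ are handled by passing to the maximal numerical range: I would instead use the approximating sequence $\zeta_{n}$ from the definition of $0\in W_{0}(S)$, form $X_{n}=\zeta_{n}\otimes\eta_{n}$ with $\eta_{n}=S\zeta_{n}/\|S\zeta_{n}\|$, and show $\|\delta_{S}(X_{n})\|\to 2\|S\|=\|\delta_{S}\|$ while $\|X_{n}\|=1$. By Lemma~\ref{pro1} applied to $S$ we may moreover replace $S$ by a nearby self-adjoint compact $Z$ for which the supremum is genuinely attained, and then transport norm-attainability back to $\delta_{S}$ via Theorem~\ref{Adj} and the stability remark preceding this section.

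The hard part will be upgrading the limiting estimate $\|\delta_{S}(X_{n})\|\to\|\delta_{S}\|$ to actual attainment at a single norm-one element — i.e.\ producing $X_{0}$ with $\|\delta_{S}(X_{0})\|=\|\delta_{S}\|$ rather than merely a maximising sequence. This is exactly the role of Lemma~\ref{pro1}: it furnishes, arbitrarily close to $S$, a compact self-adjoint $Z$ with $\|Z\|=\|S\|$, a unit vector $\eta$ with $\|Z\eta\|=\|Z\|$, and $\langle Z\eta,\eta\rangle=0$; for such a $Z$ the rank-one operator $\zeta\otimes\eta$ (with $\zeta=Z\eta/\|Z\eta\|\perp\eta$) will satisfy $\|\delta_{Z}(\zeta\otimes\eta)\|=2\|Z\|$ on the nose, since compactness forces the weak limits to be attained. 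I would then conclude by noting that $\delta_{S}$ is norm-attainable because it is a norm limit of norm-attainable derivations $\delta_{Z}$ lying in the closed norm-attainable class, invoking the density/closure properties of $NA(H)$ used earlier and the fact that $\|\delta_{S}-\delta_{Z}\|=\|\delta_{S-Z}\|\leq 2\|S-Z\|<2\alpha$ can be made arbitrarily small.
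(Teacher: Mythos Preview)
Your approach has a genuine gap. The paper's proof is direct and elementary: since $\langle S\zeta,\zeta\rangle=0$, the vectors $\zeta$ and $S\zeta$ are orthogonal, and one defines a single rank-two operator $X$ by $X\zeta=\zeta$, $X(S\zeta)=-S\zeta$, and $Xx=0$ for $x\perp\{\zeta,S\zeta\}$. Then $\|X\|=1$ and $(SX-XS)\zeta=S\zeta-(-S\zeta)=2S\zeta$, so $\|\delta_{S}(X)\|\geq 2\|S\|$. Combined with $\|\delta_{S}\|=2\|S\|$ (which holds because $0\in W_{0}(S)$, Stampfli's formula), this gives $\|\delta_{S}(X)\|=\|\delta_{S}\|$ on the nose. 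No approximation, no Lemma~\ref{pro1}, no closure argument.

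Your rank-one choice $X_{0}=\zeta\otimes\eta$ does not work: with $\eta=S\zeta/\|S\|$ you compute $\delta_{S}(X_{0})\eta=S\zeta-\langle S\eta,\eta\rangle\zeta$, and since $S\zeta\perp\zeta$ this vector has norm $\bigl(\|S\|^{2}+|\langle S\eta,\eta\rangle|^{2}\bigr)^{1/2}\leq\sqrt{2}\,\|S\|$, which is strictly less than $2\|S\|$ in general. You noticed this shortfall and tried to repair it by passing to a maximising sequence $X_{n}$ and then to nearby $\delta_{Z}$ via Lemma~\ref{pro1}, concluding that $\delta_{S}$ is norm-attainable because it is a norm limit of norm-attainable $\delta_{Z}$. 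That last step is the fatal one: the class of norm-attainable operators is \emph{not} closed (think of the diagonal operator $\mathrm{diag}(1-1/n)$ on $\ell^{2}$, which is not norm-attaining yet is a limit of finite-rank, hence norm-attaining, operators). Nothing in the paper asserts such closure; Theorem~\ref{THMM2} gives density, which is the opposite direction. So the detour through Lemma~\ref{pro1} and a closure argument cannot salvage the proof. The fix is simply to replace the rank-one $X_{0}$ by the rank-two reflection $X$ above, after which the argument is one line.
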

\begin{proof}
  Define $X$ by setting $X:\zeta \rightarrow \zeta ,\;\;S\zeta \rightarrow -S\zeta, \;\;x\rightarrow 0,$  whenever $x\bot \{\zeta,S\zeta\}.$ Since $X$ is a bounded operator on $H$ and $\|X\zeta\|=\|X\|=1,$ $\|SX\zeta -XS\zeta\|=\|S\zeta -(-S\zeta)\|=2\|S\zeta\|=2\|S\|.$ It follows that
$\|\delta_{S}\|=2\|S\|$ via the result in \cite{Oke2}, because $\langle S\zeta,\zeta\rangle =0\in W_{0}(S).$ Hence we have that $\|SX-XS\|=2\|S\|=\|\delta_{S}\|.$ Therefore, $\delta_{S}$ is norm-attainable.
\end{proof}
\noindent The next result gives the conditions for norm-attainability of a generalized derivation. We give the following proposition.

\begin{proposition}\label{pro2}
  Let $H$ be a reflexive, dense, separable, infinite dimensional complex Hilbert space. Let $S,T\in B(H).$ If there exists unit vectors $\zeta, \eta\in H$ such that $\|S\zeta\|=\|S\|,\;\|T\eta\|=\|T\|$ and $\frac{1}{\|S\|}\langle S\zeta,\zeta\rangle =- \frac{1}{\|T\|}\langle T\eta,\eta\rangle ,$ then $\delta_{S,T}$ is norm-attainable.
\end{proposition}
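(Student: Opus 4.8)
The plan is to imitate the construction used in the proof of Proposition for $\delta_S$, but now handling the two implementing operators $S$ and $T$ separately while matching their normalized quadratic forms at the common value $\beta := \frac{1}{\|S\|}\langle S\zeta,\zeta\rangle = -\frac{1}{\|T\|}\langle T\eta,\eta\rangle$. Without loss of generality I would normalize $\|S\| = \|T\| = 1$, so that $\langle S\zeta,\zeta\rangle = \beta$ and $\langle T\eta,\eta\rangle = -\beta$, with $|\beta| \le 1$. The operator norm of $\delta_{S,T} = L_S - R_T$ satisfies $\|\delta_{S,T}\| = \|S\| + \|T\| = 2$ whenever the maximal numerical ranges are suitably aligned; the key input is that $\beta \in W_0(S)$ and $-\beta \in W_0(T)$, so by the estimate in \cite{Oke2} (as invoked in the previous proposition) one gets $\|\delta_{S,T}\| \ge 2\,\mathrm{Re}\,\langle e^{i\theta}\cdot\rangle$ type lower bounds that force $\|\delta_{S,T}\| = 2$.

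First I would exhibit an explicit test operator $X \in B(H)$ with $\|X\| = 1$ realizing the norm. Guided by the rank-one-type construction in the $\delta_S$ case, I would look for $X$ mapping $\eta \mapsto \zeta$ (so that $SX\eta = S\zeta$) and arrange the complementary behaviour so that $XT\eta = XT\eta$ points in the direction of $-S\zeta$. Concretely, set $X$ to send a suitable unit vector built from $\eta$ and $T\eta$ to a suitable unit vector built from $\zeta$ and $S\zeta$, and $0$ on the orthogonal complement. Then $\delta_{S,T}(X)\eta = SX\eta - XT\eta$, and the matching condition $\langle S\zeta,\zeta\rangle = -\langle T\eta,\eta\rangle$ is exactly what is needed to make the two contributions $S\zeta$ and $-XT\eta$ add constructively, giving $\|\delta_{S,T}(X)\eta\| = 2 = \|\delta_{S,T}\|$. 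Thus $\delta_{S,T}$ attains its norm at $X$.

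\textbf{Main obstacle.} The delicate point is the well-definedness and contractivity of $X$: in the one-operator case the vectors $\zeta$ and $S\zeta$ need not be orthogonal, and here we must simultaneously control the two (generally non-orthogonal) pairs $\{\zeta, S\zeta\}$ and $\{\eta, T\eta\}$, with $X$ required to be a genuine contraction on all of $H$, not merely on a two-dimensional subspace. I would handle this by a Gram-Schmidt / polar-type argument exactly as in Lemma \ref{pro1}: decompose $S\zeta = \beta\zeta + \gamma\,\zeta'$ with $\zeta'\perp\zeta$, $\|\zeta'\|=1$, $|\gamma|^2 = 1-|\beta|^2$, and similarly $T\eta = -\beta\eta + \gamma'\,\eta'$ with $|\gamma'|^2 = 1 - |\beta|^2$; since the two "defect" coefficients have equal modulus $\sqrt{1-|\beta|^2}$, one can define a partial isometry $X$ on $\mathrm{span}\{\eta,\eta'\}$ onto $\mathrm{span}\{\zeta,\zeta'\}$ matching these data, extended by $0$, which is automatically a contraction. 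Verifying $SX\eta - XT\eta = 2S\zeta$ (up to a unimodular scalar that can be absorbed) is then a short computation using the equal-modulus coincidence of the defect terms. Finally, combining $\|\delta_{S,T}(X)\| \ge \|\delta_{S,T}(X)\eta\| = 2$ with the upper bound $\|\delta_{S,T}\| \le \|S\| + \|T\| = 2$ and $\|X\|=1$ yields $\|\delta_{S,T}(X)\| = \|\delta_{S,T}\|$, so $\delta_{S,T}$ is norm-attainable, which is what we wanted.
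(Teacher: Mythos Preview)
Your approach is essentially the paper's: both build a partial isometry $X$ sending $\eta\mapsto\zeta$ and $T\eta/\|T\|\mapsto -S\zeta/\|S\|$ (zero on the orthogonal complement of $\mathrm{span}\{\eta,T\eta\}$), then evaluate $(SX-XT)\eta$ to hit $\|S\|+\|T\|$; your Gram--Schmidt with $\zeta',\eta'$ is just an explicit parametrization of the same map, and the equal-modulus ``defect'' observation $|\gamma|=|\gamma'|=\sqrt{1-|\beta|^2}$ is exactly what the paper verifies when checking $\|Xh\|=1$. Two small points to tighten: the reduction to $\|S\|=\|T\|=1$ is not literally without loss of generality, since $\delta_{S,T}$ does not scale homogeneously under independent rescalings of $S$ and $T$ --- simply carry the norms through as the paper does; and when $|\beta|=1$ your Gram--Schmidt collapses ($\gamma=\gamma'=0$, no $\zeta',\eta'$), so you need the paper's separate, easier rank-one case $X:\eta\mapsto\zeta$, $\{\eta\}^\perp\mapsto 0$.
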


\begin{proof}
By linear dependence of vectors, if $\eta$ and $T\eta$ are linearly dependent, i.e.,$T\eta =\phi\|T\|\eta,$ then it is true that $|\phi|=1$ and $|\langle T\eta, \eta\rangle |=\|T\|$. It follows that $|\langle S\zeta, \zeta\rangle |=\|S\|$ which implies that $S\zeta =\varphi\|S\|\zeta$ and $|\varphi|=1.$ Hence $\left\langle\frac{S\zeta}{\|S\|},\zeta\right\rangle = \varphi =- \left\langle\frac{T\eta}{\|T\|},\eta\right\rangle =-\phi .$ Defining $X$ as $X:\eta \rightarrow\zeta  ,\;\; \{\eta\}^{\bot} \rightarrow 0,$ we have $\|X\|=1$ and
$(SX-XT)\eta =\varphi(\|S\|+\|T\|)\zeta ,$ which implies that $\|SX-XT\|=\|(SX-XT)\eta\|=\|S\|+\|T\|.$ By \cite{Oke2}, it follows that
$\|SX-XT\|=\|S\|+\|T\|=\|\delta_{S,T}\|.$ That is $\delta_{S,T}$ is norm-attainable.
\noindent If $\eta$ and $T\eta$ are linearly independent, then $\left|\left\langle\frac{T\eta}{\|T\|},\eta\right\rangle \right|<1,$ which implies that $\left|\left\langle\frac{S\zeta}{\|S\|},\zeta\right\rangle\right|<1.$ Hence $\zeta$ and $S\zeta$ are also  linearly independent. Let us redefine $X$ as follows:
$X:\eta \rightarrow\zeta  ,\;\;\frac{T\eta}{\|T\|}\rightarrow -\frac{S\zeta}{\|S\|},\;\; x \rightarrow 0,$ where $x\in \{\eta,T\eta\}^{\bot}.$
We show that $X$ is a partial isometry. Let $\frac{T\eta}{\|T\|}=\left\langle\frac{T\eta}{\|T\|},\eta\right\rangle \eta +\tau h,\;\; \|h\|=1,\;h\bot\eta .$
Since $\eta$ and $T\eta$ are linearly independent, $\tau\neq 0.$ So we have that
$X\frac{T\eta}{\|T\|}=\left\langle\frac{T\eta}{\|T\|},\eta\right\rangle X\eta +\tau X h=-\left\langle\frac{S\zeta}{\|S\|},\zeta\right\rangle \zeta +\tau X h,$ which implies that
$\left\langle X\frac{T\eta}{\|T\|},\zeta\right\rangle=-\left\langle\frac{S\zeta}{\|S\|},\zeta\right\rangle +\tau \langle X h,\zeta\rangle =-\left\langle\frac{S\zeta}{\|S\|},\zeta\right\rangle . $\\ It follows then that $\langle X h,\zeta\rangle =0$ i.e., $X h\bot\zeta (\zeta =X\eta).$ Hence we have that $\left\|\left\langle\frac{S\zeta}{\|S\|},\zeta\right\rangle \zeta\right\|^{2}+\|\tau X h\|^{2}=\left\|X\frac{T\eta}{\|T\|}\right\|^{2}
=\left|\left\langle\frac{T\eta}{\|T\|},\eta\right\rangle\right|^{2}+|\tau|^{2}=1,$
which implies that $\|Xh\|=1.$ Now it is evident that $X$ a partial isometry and
$\|(SX-XT)\zeta\|=\|SX-XT\|=\|S\|+\|T\|,$ which is equivalent to $\|\delta_{S,T}(X)\|=\|S\|+\|T\|.$ By a simple calculation and considering and \cite{Oke4}, $\|\delta_{S,T}\|=\|S\|+\|T\|.$ Hence $\delta_{S,T}$ is norm-attainable.
\end{proof}

\begin{corollary}
Let $S,T\in B(H)$ If both $S$ and $T$ are norm-attainable then the basic elementary operator  $M_{S,\;T} $ is also norm-attainable.
\end{corollary}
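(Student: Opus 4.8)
The plan is to reduce the statement about the basic elementary operator $M_{S,T}(X)=SXB$ (here $B=T$) to the norm computation $\|M_{S,T}\|=\|S\|\,\|T\|$ together with the explicit construction of a norm-one operator at which this norm is attained, exactly in the spirit of Proposition \ref{pro2}. First I would invoke the norm-attainability hypotheses: there exist unit vectors $\zeta,\eta\in H$ with $\|S\zeta\|=\|S\|$ and $\|T\eta\|=\|T\|$. The natural candidate for the maximizing operator is the rank-one operator $X_{0}=\eta\otimes\overline{T\eta}/\|T\|$, that is, $X_{0}x=\frac{1}{\|T\|}\langle x,T\eta\rangle\,\eta$, which has operator norm $\|X_{0}\|=1$ since $\|T\eta\|=\|T\|$. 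Then $M_{S,T}(X_{0})\eta = S X_{0}(T\eta)=\frac{1}{\|T\|}\langle T\eta,T\eta\rangle\,S\eta$ — which is not quite what is wanted, so the cleaner choice is $X_{0}x=\langle x,\eta\rangle\,\zeta$ (so $X_{0}$ maps $\eta\mapsto\zeta$ and $\{\eta\}^{\perp}\mapsto 0$), giving $\|X_{0}\|=1$ and $M_{S,T}(X_{0})(T\eta/\|T\|)=\frac{1}{\|T\|}\langle T\eta,\eta\rangle\,S\zeta$; this has the wrong scalar factor unless $\eta$ is an eigenvector of $T$, so I will instead take the vector $\eta$ on which $T$ attains its norm and feed it directly: with $X_{0}$ as above, $\|M_{S,T}(X_{0})\|\ge \|S X_{0}(T\eta)\|/\|T\eta\| = \|S\zeta\|\,|\langle T\eta,\eta\rangle|/\|T\|$, which is only a lower bound.

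The correct and standard move is therefore: choose $X_{0}$ to be the partial isometry sending $\eta\mapsto\zeta$ (and $\{\eta\}^{\perp}\mapsto 0$), and then estimate $\|M_{S,T}(X_{0})\| = \sup_{\|h\|=1}\|S X_{0} T h\|$. Taking $h=\eta$ gives $S X_{0} T\eta$; but $T\eta$ need not be a scalar multiple of $\eta$. To fix this I would instead let $X_{0}$ be the partial isometry sending $T\eta/\|T\eta\|\mapsto\zeta$ and its orthogonal complement to $0$. Then $M_{S,T}(X_{0})\eta = S X_{0}(T\eta)= \|T\eta\|\,S\bigl(X_{0}(T\eta/\|T\eta\|)\bigr)=\|T\|\,S\zeta$, so $\|M_{S,T}(X_{0})\eta\| = \|T\|\,\|S\zeta\| = \|S\|\,\|T\|$. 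Since $\|\eta\|=1$ and $\|X_{0}\|=1$, this yields $\|M_{S,T}\| \ge \|M_{S,T}(X_{0})\| \ge \|S\|\,\|T\|$. The reverse inequality $\|M_{S,T}(X)\|=\|SXT\|\le\|S\|\,\|X\|\,\|T\|$ is the submultiplicativity of the operator norm, so $\|M_{S,T}\|=\|S\|\,\|T\|$ and equality is attained at $X_{0}/\|X_{0}\|=X_{0}$.

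Thus the key steps, in order, are: (1) extract the norm-attaining unit vectors $\zeta$ for $S$ and $\eta$ for $T$ from the hypothesis; (2) define the rank-one partial isometry $X_{0}$ carrying $T\eta/\|T\eta\|$ to $\zeta$ and vanishing on its orthocomplement, and check $\|X_{0}\|=1$; (3) compute $M_{S,T}(X_{0})\eta = \|T\|\,S\zeta$ and hence $\|M_{S,T}(X_{0})\eta\| = \|S\|\,\|T\|$, giving the lower bound $\|M_{S,T}\|\ge\|S\|\,\|T\|$; (4) note the trivial upper bound $\|M_{S,T}\|\le\|S\|\,\|T\|$; (5) conclude $\|M_{S,T}(X_{0})\| = \|M_{S,T}\|$ with $X_{0}$ a unit vector in $B(H)$, i.e. $M_{S,T}$ is norm-attainable in the sense of Definition \ref{NA} applied to the operator $M_{S,T}$ acting on the Hilbert(-Schmidt or operator) space $B(H)$.

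The main obstacle is a definitional subtlety rather than a computational one: Definition \ref{NA} of norm-attainability is stated for operators on a Hilbert space $H$ via unit vectors $x_{0}\in H$, whereas $M_{S,T}$ acts on $B(H)$, which is a Banach space but not a Hilbert space in its operator norm. I would address this by adopting — as is implicit throughout Section 5 and in the cited work \cite{Oke2,Oke4} — the natural extension of the norm-attainability notion to bounded operators on normed spaces: $M_{S,T}$ is norm-attainable iff there is $X$ with $\|X\|=1$ and $\|M_{S,T}(X)\|=\|M_{S,T}\|$. With that reading, the construction above finishes the proof; the only point needing care is verifying that $X_{0}$ genuinely has operator norm exactly $1$ (not just $\le 1$), which follows because $X_{0}$ maps the unit vector $T\eta/\|T\eta\|$ to the unit vector $\zeta$.
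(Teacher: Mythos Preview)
Your final construction---the rank-one partial isometry $X_{0}$ sending $T\eta/\|T\eta\|$ to $\zeta$ and annihilating its orthocomplement---is exactly the operator $X=\langle\,\cdot\,,T\eta\rangle\zeta$ that the paper uses (after the paper's normalization $\|S\|=\|T\|=1$), and your computation $\|SX_{0}T\eta\|=\|S\|\,\|T\|$ is the paper's $\|SXT\eta\|=\|\,\|T\eta\|^{2}S\zeta\,\|=1$. The approach and the key idea are identical; the only differences are cosmetic---the paper normalizes first and skips the exploratory false starts---and your remark about the definitional subtlety (norm-attainability on the Banach space $B(H)$ rather than a Hilbert space) is a fair observation that the paper leaves implicit.
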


\begin{proof}
For any pair $(S,T)$ it is known that $\|M_{S,\;T}\|=\|S\|\|T\|.$ We can assume that $\|S\|=\|T\|=1.$ If both $S$ and $T$ are norm-attainable, then there exists unit vectors $\zeta$ and $\eta$ with $\|S\zeta\|=\|T\eta\|=1.$ We can therefore define
an operator $X$ by $X=\langle \cdot ,T\eta\rangle \zeta$. Clearly, $\|X\|=1.$
Therefore, we have $\|SXT\|\geq \|SXT\eta\|=\|\|T\eta\|^{2}S\zeta\|=1.$
Hence, $\|M_{S,\;T}(X)\|=\|SXT\|=1,$ that is $M_{S,\;T} $ is also norm-attainable.
\end{proof}

\begin{proposition}
     If the elementary operator $T_{A_{i}, B_{i}}$ is norm-attainable
     then there exists an isometry or a co-isometry $Q_{0}$ such that
      $\|T_{A_{i}, B_{i}}\|=\|\Sigma _{i=1}^{n}A_{i}Q_{_{0}}B_{i}\|.$
\end{proposition}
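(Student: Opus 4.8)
\noindent\emph{Proof strategy.}
Write $T:=T_{A_i,B_i}$, so that $T(X)=\sum_{i=1}^{n}A_iXB_i$ for $X\in B(H)$, and set $m:=\|T\|$. Read in the Banach space $B(H)$ via Definition \ref{NA}, norm-attainability of $T$ gives an $X_0\in B(H)$ with $\|X_0\|=1$ and $\|T(X_0)\|=m$; equivalently the convex, norm-continuous map $f(X):=\|T(X)\|$ attains its maximum $m$ over the closed unit ball $\mathcal B$ of $B(H)$. The plan is to upgrade $X_0$ to an \emph{extreme point} of $\mathcal B$ at which $f$ still equals $m$, and then to describe that extreme point explicitly via Kadison's theorem.

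First I would record the structural ingredients: since $B(H)=\mathcal T(H)^{*}$ is a dual space, $\mathcal B$ is convex and weak$^{*}$-compact (Banach--Alaoglu); each elementary monomial $X\mapsto A_iXB_i$ is weak$^{*}$-continuous, hence so is $T$; and the norm of $B(H)$ is weak$^{*}$-lower semicontinuous, so $f=\|\cdot\|\circ T$ is convex and weak$^{*}$-lower semicontinuous. Using only convexity of $f$, the level set $F:=\{X\in\mathcal B:\ f(X)=m\}$ is a nonempty \emph{extreme subset} of $\mathcal B$: if $\lambda X_1+(1-\lambda)X_2\in F$ with $X_1,X_2\in\mathcal B$ and $0<\lambda<1$, then $m\le\lambda f(X_1)+(1-\lambda)f(X_2)\le m$ forces $f(X_1)=f(X_2)=m$, so $X_1,X_2\in F$. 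Then I would carry out a Krein--Milman-type extraction inside $F$: by Zorn's lemma choose a minimal nonempty weak$^{*}$-compact extreme subset $F_0$ of $\mathcal B$ contained in $F$; were $F_0$ to contain two distinct points, a weak$^{*}$-continuous real-linear functional separating them would, restricted to the slice of $F_0$ on which it attains its maximum, yield a strictly smaller extreme subset of $\mathcal B$, contradicting minimality; hence $F_0=\{Q_0\}$, and a one-point extreme subset of $\mathcal B$ is precisely an extreme point of $\mathcal B$. Since $Q_0\in F$ we have $\|T(Q_0)\|=m$, and by Kadison's characterisation of the extreme points of the unit ball of $B(H)$, $Q_0$ is a maximal partial isometry, that is, either an isometry, $Q_0^{*}Q_0=I$, or a co-isometry, $Q_0Q_0^{*}=I$. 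Therefore
\[
\Big\|\sum_{i=1}^{n}A_iQ_0B_i\Big\|=\|T(Q_0)\|=m=\|T_{A_i,B_i}\|,
\]
which is the assertion.

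The main obstacle I anticipate is the extraction step: $F$ is only an extreme set and is \emph{not} convex, so Krein--Milman does not apply verbatim, and since $f$ is merely weak$^{*}$-lower semicontinuous, $F$ need not be weak$^{*}$-closed; hence one must first produce a genuinely weak$^{*}$-compact extreme subset of $\mathcal B$ lying inside $F$ before the minimal-set argument can even begin. I would try to obtain such a set by slicing $\mathcal B$ with the weak$^{*}$-continuous functionals $X\mapsto\langle T(X)\xi_k,\eta_k\rangle$ coming from norming vectors of $T(X_0)$ and intersecting, taking care that inner products of weakly convergent nets in $H$ need not converge; an alternative that dispenses with Zorn is to argue computationally, using the polar decomposition of $X_0$ together with a vector-alignment construction in the spirit of the contraction $V$ built in Lemma \ref{pro1}, to modify $X_0$ into a maximal partial isometry without decreasing $f$. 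In every version the conceptual content is the same: a norm-attaining elementary operator attains its norm at an extreme point of the unit ball of $B(H)$, and such an extreme point is an isometry or a co-isometry.
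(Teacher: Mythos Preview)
Your conceptual framework is sound: the norm is attained at some contraction $X_0$, the map $X\mapsto\|T(X)\|$ is convex, its maximizing set $F$ is a face of the unit ball $\mathcal B$, and by Kadison the extreme points of $\mathcal B$ are exactly the isometries and co-isometries. The gap you flag is genuine, though: $F$ need not be weak$^{*}$-closed (the norm is only weak$^{*}$-lower semicontinuous, not continuous), so you have no weak$^{*}$-compact extreme subset of $\mathcal B$ inside $F$ on which to start the Zorn/Krein--Milman descent. Your suggested workarounds---slicing by norming functionals, or a direct polar-decomposition modification of $X_0$---are reasonable programmes but are not carried out, so as written the argument is incomplete.

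The paper sidesteps all of this by invoking a much sharper structural fact, Lemma~3.1 of \cite{Du}: every contraction $X_0\in B(H)$ can be written as $X_0=\tfrac12(\Gamma_1+\Gamma_2)$ with each $\Gamma_j$ an isometry or a co-isometry. Granting this, your convexity step collapses to a single triangle inequality,
\[
\|T\|=\|T(X_0)\|\le\tfrac12\|T(\Gamma_1)\|+\tfrac12\|T(\Gamma_2)\|\le\|T\|,
\]
forcing $\|T(\Gamma_1)\|=\|T(\Gamma_2)\|=\|T\|$, so either $\Gamma_j$ serves as $Q_0$. This is in effect a \emph{finite} version of your extreme-point idea---every contraction is already a two-term average of extreme points of $\mathcal B$---which makes the weak$^{*}$ topology, lower semicontinuity, and Zorn's lemma irrelevant. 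Your alternative (b), modifying $X_0$ through its polar decomposition, is precisely how such two-term decompositions are usually proved, so that instinct was on target; pursuing it would have led you to the paper's short argument.
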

\begin{proof}
From \cite{Du},  if the elementary operator
$T_{A_{i}, B_{i}}$ is norm-attainable
then there exists a contractive operator $X_{0}\in B(H)$ such that
$\|T_{A_{i}, B_{i}}\|=\|\Sigma
_{i=1}^{n}A_{i}X_{_{0}}B_{i}\|.$ Lastly, it's sufficient enough to
show that $X_{0}=\frac{1}{2}(\Gamma_{1}+\Gamma_{2})$ for two
isometries or co-isometries $\Gamma_{1},\,\Gamma_{2}\in B(H)$. This
follows immediately from Lemma 3.1 in \cite{Du}.
\end{proof}

\section{New Notion of Norm-attainability for Power Operators}
In this section, we give new notions of norm-attainability for power operators.  First we begin with some auxiliary results.

\begin{proposition}\label{p normattainable}
Let $T\in NA(H).$ Then $T$ is $p$-norm-attainable if it is $p$-normal.
\end{proposition}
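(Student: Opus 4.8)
Taking the $p$-norm-attainability of $T$ to mean that its $p$-th power attains its norm, i.e.\ that $T^{p}\in NA(H)$, the plan is to manufacture a norm-attaining vector for $T^{p}$ out of one for $T$, using $p$-normality as the bridge. First I would use the hypothesis $T\in NA(H)$ together with Definition \ref{NA} to fix a unit vector $x_{0}\in H$ with $\|Tx_{0}\|=\|T\|$. Since $T^{*}T$ is positive with $\|T^{*}T\|=\|T\|^{2}$ and $\langle T^{*}Tx_{0},x_{0}\rangle=\|Tx_{0}\|^{2}=\|T\|^{2}$, the estimate $\|T^{*}Tx_{0}-\|T\|^{2}x_{0}\|^{2}=\|T^{*}Tx_{0}\|^{2}-2\|T\|^{2}\langle T^{*}Tx_{0},x_{0}\rangle+\|T\|^{4}\le\|T\|^{4}-2\|T\|^{4}+\|T\|^{4}=0$ forces $T^{*}Tx_{0}=\|T\|^{2}x_{0}$, so that $x_{0}$ is a unit eigenvector of $|T|$ sitting at the top of its spectrum.

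Next I would extract the structural content of $p$-normality. From $T^{p}T^{*}=T^{*}T^{p}$ one obtains, by induction on $k$, that $T^{p}(T^{*})^{k}=(T^{*})^{k}T^{p}$ for all $k\ge 1$; taking $k=p$ and using $(T^{p})^{*}=(T^{*})^{p}$ gives $T^{p}(T^{p})^{*}=(T^{p})^{*}T^{p}$, so $T^{p}$ is itself a normal operator. Hence $\|T^{p}\|=r(T^{p})$ (with $r(\cdot)$ the spectral radius), and the spectral mapping theorem yields $\|T^{p}\|=r(T^{p})=r(T)^{p}$, while of course $\|(T^{p})^{*}T^{p}\|=\|T^{p}\|^{2}$.

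The core of the argument is then to show that the very same $x_{0}$ attains $\|T^{p}\|$ for $T^{p}$. I would attempt to prove, by induction on $k$, that each iterate $T^{k}x_{0}$ (for $0\le k\le p-1$) is again an eigenvector of $T^{*}T$ for the eigenvalue $\|T\|^{2}$; granting this, the quantity $\|T^{p}x_{0}\|^{2}=\langle(T^{*})^{p}T^{p}x_{0},x_{0}\rangle$ telescopes through $\|T^{k+1}x_{0}\|^{2}=\langle T^{*}T(T^{k}x_{0}),T^{k}x_{0}\rangle=\|T\|^{2}\|T^{k}x_{0}\|^{2}$ down to $\|T\|^{2p}$, and one then checks that $\|T\|^{2p}=\|T^{p}\|^{2}$, whence $\|T^{p}x_{0}\|=\|T^{p}\|$ and $T^{p}\in NA(H)$. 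Propagating the eigenequation from $x_{0}$ to $Tx_{0}$, and onward along the iterates, is exactly the point at which the relation $T^{p}T^{*}=T^{*}T^{p}$ must be invoked to commute $T^{*}T$ past powers of $T$.

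The step I expect to be the main obstacle is precisely this propagation. For a genuinely normal $T$ it is immediate, because $TT^{*}=T^{*}T$ means $T$ leaves the eigenspaces of $|T|$ invariant, so $Tx_{0}$ is automatically a top eigenvector of $T^{*}T$; but $p$-normality is strictly weaker than normality and does \emph{not} give $T(T^{*}T)=(T^{*}T)T$, so there is no a priori reason that $T^{*}T(Tx_{0})=\|T\|^{2}Tx_{0}$. Carrying the induction through will require carefully interleaving the commutation relations $T^{p}(T^{*})^{k}=(T^{*})^{k}T^{p}$ with the eigenequation for $x_{0}$, and simultaneously pinning down $\|T\|^{2p}=\|T^{p}\|^{2}$, which is not automatic for merely $p$-normal operators; this is where the genuine work, and any tacit extra hypotheses, will reside. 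Should a direct transfer along $T$ prove awkward, a reasonable alternative is to pass to $T^{*}$, which is again $p$-normal and, by Theorem \ref{Adj}, again norm-attainable, and run the same scheme there.
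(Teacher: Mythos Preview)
Your derivation that $T^{p}$ is normal from $T^{p}T^{*}=T^{*}T^{p}$ coincides with the paper's first step. Beyond that point, however, the paper's argument is far terser than yours: after a brief invocation of the Fuglede property (which, as written, only recovers $T^{*}T^{p}=T^{p}T^{*}$, i.e.\ the hypothesis itself), the paper simply states ``But $T\in NA(H)$ and $T^{p}$ is normal so it follows that there exists a unit vector $x\in H$ such that $\|T^{p}x\|=\|T^{p}\|$,'' with no further justification. In other words, the precise implication you flag as the main obstacle---that norm-attainability of $T$ together with normality of $T^{p}$ forces $T^{p}\in NA(H)$---is asserted rather than proved in the paper.

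Your more concrete plan (fix a top eigenvector $x_{0}$ of $T^{*}T$ from $T\in NA(H)$, then propagate the eigenequation along the iterates $T^{k}x_{0}$ and match $\|T\|^{2p}$ with $\|T^{p}\|^{2}$) is a genuine attempt to fill that gap, and your caution is well placed: $p$-normality gives only $T^{p}T^{*}=T^{*}T^{p}$, not $T(T^{*}T)=(T^{*}T)T$, so neither the propagation $T^{*}T(Tx_{0})=\|T\|^{2}Tx_{0}$ nor the equality $\|T^{p}\|=\|T\|^{p}$ (you only obtain $\|T^{p}\|=r(T^{p})=r(T)^{p}\le\|T\|^{p}$) follows without additional input. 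The paper supplies no such input and offers no mechanism that your argument is missing. Your proposal is therefore both more detailed and more candid about the difficulty than the paper's own proof, but the gap you identify is real and is not closed by anything in the paper.
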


\begin{proof}
Let $T\in NA(H)$ be $p$-normal, i.e. $T^{p}T^{*} = T^{*}T^{p}.$ By a simple manipulation, it is easy to see that
$T^{p}(T^{*})^{p} = (T^{*})^{p}T^{p}.$
Hence, $T^{p}$ is normal. Now, if we consider $T^{p}$ to be normal. Then $T^{p}T = TT^{p}$ since,  $T^{*}T^{p} = T^{p}T^{*}$ by by Fuglede property. Therefore, $T$ is $p$-normal. But  $T\in NA(H)$ and $T^{p}$ is normal so it follows that  there exists a unit vector $x\in H$ such that $\|T^{p}x\|=\|T^{p}\|, $ for any $p\in \mathbb{N}.$
\end{proof}
\begin{remark}
Every norm-attainable operator and  every bounded normal operator is is $p$-norm-attainable and $p$-normal respectively for any $p\in \mathbb{N}$. However, the converses are not true in general
\end{remark}

\begin{theorem}
Let $NA_{p}(H)$ be the set of all $p$-norm-attainable operators on $H.$ Then $NA_{p}(H)$ is a closed subset of
$NA(H)$ which is closed under scalar multiplication if and only if for any $T\in NA(H), $ $T$ is $p$-normal.
\end{theorem}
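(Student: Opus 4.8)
The plan is to establish the biconditional by proving the two implications separately, with Proposition \ref{p normattainable} serving as the bridge between $p$-normality and $p$-norm-attainability, and with $NA_p(H)$ regarded as a subset of $NA(H)$ equipped with the relative operator-norm topology.

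For \emph{sufficiency} I would argue as follows. Assume that every $T\in NA(H)$ is $p$-normal. Then Proposition \ref{p normattainable} gives that every such $T$ is $p$-norm-attainable, so $NA(H)\subseteq NA_p(H)$; since $NA_p(H)\subseteq NA(H)$ as in the statement of the theorem, we conclude $NA_p(H)=NA(H)$. Hence $NA_p(H)$ is trivially a closed subset of $NA(H)$. Moreover it is closed under scalar multiplication: by the Remark following Theorem \ref{Adj}, $\lambda A\in NA(H)$ whenever $A\in NA(H)$ and $\lambda$ is a scalar, and since $\|(\lambda A)^{p}x\|=|\lambda|^{p}\|A^{p}x\|$ while $\|(\lambda A)^{p}\|=|\lambda|^{p}\|A^{p}\|$, any unit vector attaining $\|A^{p}\|$ also attains $\|(\lambda A)^{p}\|$, so $\lambda A\in NA_p(H)$.

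For \emph{necessity} — which I expect to be the main obstacle — suppose $NA_p(H)$ is a closed subset of $NA(H)$ that is closed under scalar multiplication, and fix $T\in NA(H)$; the goal is the identity $T^{p}T^{*}=T^{*}T^{p}$. The idea is first to upgrade the hypotheses to the equality $NA_p(H)=NA(H)$: using the density of the norm-attainable class recalled in the Introduction together with the topological closedness of $NA_p(H)$ in $NA(H)$ and its stability under scalar multiples, I would show that no operator of $NA(H)$ can escape the closed, scaling-invariant set $NA_p(H)$. Granting $NA_p(H)=NA(H)$, the operator $T$ is $p$-norm-attainable, so there is a unit vector $x_{0}$ with $\|T^{p}x_{0}\|=\|T^{p}\|$; feeding $T^{p}$ into the polar-decomposition and spectral-projection machinery of Lemma \ref{pro1} (legitimate since $W_{0}(T^{p})\neq\emptyset$ in a reflexive separable $H$) and then invoking the Fuglede-type step from the proof of Proposition \ref{p normattainable}, namely $T^{*}T^{p}=T^{p}T^{*}$, forces $T$ to be $p$-normal. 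The delicate point throughout is precisely this passage from the purely topological and algebraic closure properties of $NA_p(H)$ to the pointwise algebraic identity for \emph{every} $T\in NA(H)$: because $NA(H)$ is dense but not itself closed in $B(H)$, one must exploit the scalar-multiplication closure to capture the limit points that would otherwise lie outside $NA_p(H)$, and it is the careful verification of this step that carries the weight of the argument.
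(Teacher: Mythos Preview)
Your sufficiency argument is sound and in fact cleaner than the paper's: once every $T\in NA(H)$ is $p$-normal, Proposition \ref{p normattainable} forces $NA_p(H)=NA(H)$, and both closure properties follow trivially. The paper argues the same direction more tersely, simply observing that for $\pi\in\mathbb{K}$ and $T$ $p$-normal one has $(\pi T)^{p}(\pi T)^{*}=(\pi T)^{*}(\pi T)^{p}$, so scalar multiples of $p$-normal operators are $p$-normal; this is essentially the same content.

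Your necessity argument, however, has two genuine gaps. First, the step ``no operator of $NA(H)$ can escape the closed, scaling-invariant set $NA_p(H)$'' does not follow from the hypotheses: a subset of $NA(H)$ can be closed in the relative topology and stable under scalars without being all of $NA(H)$ (the singleton $\{0\}$ already shows this), and density of $NA(H)$ in $B(H)$ says nothing about which proper subsets of $NA(H)$ are closed in $NA(H)$. Second, even granting $NA_p(H)=NA(H)$, your proposed route from $p$-norm-attainability of $T$ to $p$-normality of $T$ does not work. Lemma \ref{pro1} produces a nearby operator $Z$ with controlled numerical-range data; it never asserts that $T$ itself satisfies any commutation identity. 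Likewise, the Fuglede step in Proposition \ref{p normattainable} runs in the opposite direction: it uses normality of $T^{p}$ as a \emph{hypothesis} to deduce $T^{*}T^{p}=T^{p}T^{*}$, not as a conclusion drawn from a norm-attainment condition. There is no mechanism in your outline that converts the scalar equality $\|T^{p}x_{0}\|=\|T^{p}\|$ into the operator equality $T^{p}T^{*}=T^{*}T^{p}$.

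The paper's own treatment of necessity is extremely brief: it asserts that ``the converse follows immediately by taking limits over a sequence of vectors in $H$,'' i.e.\ it invokes the topological closedness of $NA_p(H)$ directly on an approximating sequence rather than first passing through the equality $NA_p(H)=NA(H)$ or through Lemma \ref{pro1}. So the intended argument is a one-line limiting procedure, not the two-stage reduction you propose; your detour through Lemma \ref{pro1} and the density of $NA(H)$ is not what the paper has in mind and, as explained above, does not close.
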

\begin{proof}
Consider  $\pi\in \mathbb{K}$ and let $T$  be $p$-normal. By carrying out a simple calculation we find that $(\pi T)p(\pi T)^{*} =(\pi T)^{*}(\pi T)p.$ and so  $\pi T$ is $p$-normal. If $T  \in NA(H)$ then the converse follows immediately by taking limits over a sequence of vectors in $H.$ Therefore, $T$ is $p$-normal.
\end{proof}
\begin{corollary}
Let $T \in NA(H)$ be $p$-norm-attainable. Then the following conditions hold:

\begin{itemize}
  \item [(i).] $UTU^{*}$ is $p$-norm-attainable for a unitary operator $U$.
  \item [(ii).] $T^{*}$ is $p$-norm-attainable.
  \item [(iii).] $T^{-1}$ is $p$-norm-attainable if it exists.
  \item [(iv).] If there exists a unitary equivalence between $T_{0} \in NA(H)$ and $T$, then $T_{0}$ is $p$-norm-attainable.
  \item [(v).] If $G$ is a unitarily invariant subspace of $H$ such that $G$ reduces $T$, then $T_{0}=T/G$
is a $p$-norm-attainable operator.
\end{itemize}
\end{corollary}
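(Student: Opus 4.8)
The plan is to route every item through a single characterization: $T$ is $p$-norm-attainable precisely when $T^{p}\in NA(H)$, i.e. when some unit vector $x_{0}\in H$ satisfies $\|T^{p}x_{0}\|=\|T^{p}\|$, which is exactly what Proposition \ref{p normattainable} delivers. So for each of (i)--(v) it suffices to exhibit a norming unit vector for the appropriate power operator, the algebra reducing to the three elementary identities $(UTU^{*})^{p}=UT^{p}U^{*}$, $(T^{*})^{p}=(T^{p})^{*}$ and $(T^{-1})^{p}=(T^{p})^{-1}$.

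Items (i), (ii) and (iv) are then bookkeeping. For (i): unitary conjugation is an isometry of $B(H)$, so $\|(UTU^{*})^{p}\|=\|T^{p}\|$, and if $x_{0}$ norms $T^{p}$ then $Ux_{0}$ is a unit vector with $\|(UTU^{*})^{p}(Ux_{0})\|=\|UT^{p}x_{0}\|=\|T^{p}\|$. Item (iv) is (i) restated, a unitary equivalence of $T_{0}$ with $T$ meaning $T_{0}=UTU^{*}$ for a unitary $U$. For (ii): $T$ being $p$-norm-attainable gives $T^{p}\in NA(H)$, and Theorem \ref{Adj} applied to $T^{p}$ gives $(T^{p})^{*}=(T^{*})^{p}\in NA(H)$, so $T^{*}$ is $p$-norm-attainable.

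The content sits in (iii) and (v). For (v), that $G$ reduces $T$ yields $H=G\oplus G^{\perp}$ with both summands $T$-invariant, hence $T^{p}=(T|_{G})^{p}\oplus(T|_{G^{\perp}})^{p}$ and $\|T^{p}\|=\max\{\|(T|_{G})^{p}\|,\|(T|_{G^{\perp}})^{p}\|\}$; writing a norming unit vector $x_{0}=u\oplus v$ and combining the Pythagorean identity $\|(T|_{G})^{p}u\|^{2}+\|(T|_{G^{\perp}})^{p}v\|^{2}=\|T^{p}\|^{2}$ with the pointwise bounds $\|(T|_{G})^{p}u\|\le\|(T|_{G})^{p}\|\,\|u\|\le\|T^{p}\|\,\|u\|$ (and symmetrically for $v$) forces equality throughout, so $u/\|u\|$ norms $(T|_{G})^{p}$ whenever $u\neq0$ and $\|(T|_{G})^{p}\|=\|T^{p}\|$; the remaining degenerate configuration is absorbed by the standing hypothesis that $T_{0}=T/G$ itself lies in $NA(H)$ (as stated for (iv)) together with the $p$-normality of $T|_{G}$ inherited from $T$ via Proposition \ref{p normattainable}. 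Item (iii) is the only place where I expect a genuine difficulty: the identity $(T^{-1})^{p}=(T^{p})^{-1}$ reduces the claim to norm-attainability of $(T^{p})^{-1}$, but norm-attainability of $T^{p}$ controls only the \emph{supremum} of $\|T^{p}x\|$ over the unit sphere, whereas $\|(T^{p})^{-1}\|$ is the reciprocal of the \emph{infimum}, which need not be attained. I would close this by leaning on the $p$-normality hypothesis of Proposition \ref{p normattainable}: $p$-normality forces $T^{p}$, and hence $(T^{p})^{-1}$, to be normal, so the question becomes whether a normal operator whose reciprocal is norm-attainable is itself ``bottom-attainable'', and this is settled on the reflexive space $H$ by extracting a weakly convergent subsequence of approximate minimal vectors for $T^{p}$, just as in the proof of Lemma \ref{pro1}, and checking via the spectral theorem that the limit vector realizes $\|(T^{p})^{-1}\|$. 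This normality-plus-reflexivity step for (iii) is the crux; the other four items are formal manipulation of the three power identities above.
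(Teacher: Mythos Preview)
Your overall strategy---reduce each item to a statement about $T^{p}$ via the identities $(UTU^{*})^{p}=UT^{p}U^{*}$, $(T^{*})^{p}=(T^{p})^{*}$, $(T^{-1})^{p}=(T^{p})^{-1}$, and then invoke the definition of norm-attainability together with Theorem~\ref{Adj}---is exactly the route the paper takes. For (i), (ii) and (iv) your arguments are the paper's arguments, only spelled out more explicitly (your exhibition of the norming vector $Ux_{0}$ in (i) is more concrete than the paper's one-line appeal to Definition~\ref{NA} and Theorem~\ref{Adj}).

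Where you diverge is in the care you take with (iii) and (v). Your worry about (iii) is entirely legitimate: the paper's proof of (iii) reads in full ``Since $T$ is $p$-norm-attainable, $T^{p}$ is $p$-norm-attainable. Since $(T^{p})^{-1}=(T^{-1})^{p}$ is $p$-norm-attainable, $T^{-1}$ is a $p$-norm-attainable operator.'' That is, the paper simply asserts the very implication (from $T^{p}\in NA(H)$ to $(T^{p})^{-1}\in NA(H)$) that you flag as the crux, with no justification; your normality-plus-reflexivity patch already goes well beyond what the paper supplies. Similarly, for (v) the paper writes only ``Follows from the fact that $G$ is invariant under $T$. The rest is clear,'' whereas you give an actual direct-sum argument and honestly note where it can break down. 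In short: your framework coincides with the paper's, and the places where you see genuine content are precisely the places where the paper offers none.
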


\begin{proof}
$(i).$ Since $U$ is unitary then $UU^{*}=U^{*}U=I$, where $I$ is the identity operator then by Definition \ref{NA} and by Theorem \ref{Adj} the obtain the result.\\
$(ii).$ Since $T$ is $p$-norm-attainable from Proposition \ref{p normattainable}, $T^{p}$ is $p$-norm-attainable and so $(T^{*})^{p}$ $p$-norm-attainable. Consequently, $T^{*}$ is $p$-norm-attainable.\\
$(iii).$ Suppose that $T^{-1}$ exists. Since $T$ is $p$-norm-attainable, $T^{p}$ is $p$-norm-attainable. Since $(T^{p})^{-1} = (T^{-1})^{p}$ is $p$-norm-attainable, $T^{-1}.$ is a $p$-norm-attainable operator.
$(iv).$ Follows immediately from $(i).$\\
$(iv).$Follows from the fact that $G$ is
invariant under $T.$ The rest is clear.
\end{proof}

\begin{theorem}
Let $A, B\in  NA_{p}(H)$ be  commuting $p$-norm-attainable operators, then $AB$ is a
$p$-norm-attainable operator.
\end{theorem}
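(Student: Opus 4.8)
The plan is to move everything onto the single operator $(AB)^{p}$, verify that it is normal, and then build a unit vector attaining $\|(AB)^{p}\|$ out of the norm-attaining vectors supplied for $A^{p}$ and $B^{p}$. Since $AB=BA$ we have $(AB)^{p}=A^{p}B^{p}$ with $A^{p}B^{p}=B^{p}A^{p}$, and by Proposition \ref{p normattainable} (and the relation, established in its proof, that a $p$-normal operator has normal $p$-th power) the operators $A^{p},B^{p}$ are normal. Applying the Fuglede--Putnam theorem to the commuting pair $A^{p},B^{p}$ gives $A^{p}(B^{p})^{*}=(B^{p})^{*}A^{p}$, so $A^{p}B^{p}$ commutes with its adjoint and hence $(AB)^{p}$ is normal; thus $AB$ is $p$-normal, and it remains only to produce a unit vector $z$ with $\|(AB)^{p}z\|=\|(AB)^{p}\|$.

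To locate such a $z$ I would pass to the top spectral subspaces. Fixing unit vectors $x,y$ with $\|A^{p}x\|=\|A^{p}\|$ and $\|B^{p}y\|=\|B^{p}\|$, normality forces $(A^{p})^{*}A^{p}x=\|A^{p}\|^{2}x$, so the spectral projection $E$ of $|A^{p}|$ at the value $\|A^{p}\|$ is non-zero; $A^{p}$ commutes with $E$, and by the Fuglede--Putnam relations above so do $B^{p}$ and $(B^{p})^{*}$, whence $\operatorname{Ran}E$ reduces both $A^{p}$ and $B^{p}$, with $A^{p}$ acting as $\|A^{p}\|$ times a unitary on $\operatorname{Ran}E$ and $B^{p}|_{\operatorname{Ran}E}$ again normal. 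If $\operatorname{Ran}E$ contains a norm-attaining unit vector $z$ for $B^{p}$, then $\|(AB)^{p}z\|=\|A^{p}B^{p}z\|=\|A^{p}\|\,\|B^{p}z\|=\|A^{p}\|\,\|B^{p}\|\ge\|(AB)^{p}\|\ge\|(AB)^{p}z\|$, forcing equality throughout and finishing the argument. To obtain this $z$ one restricts $B^{p}$ to the Hilbert space $\operatorname{Ran}E$, takes a maximizing sequence there, and --- exactly as in the compact self-adjoint theorem proved earlier, using the standing reflexivity and separability hypotheses --- passes to a weak limit and identifies its image norm with $\|B^{p}|_{\operatorname{Ran}E}\|$.

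The step I expect to be the real obstacle is precisely the identity $\|B^{p}|_{\operatorname{Ran}E}\|=\|B^{p}\|$ --- equivalently $\|(AB)^{p}\|=\|A^{p}\|\,\|B^{p}\|$ together with $\operatorname{Ran}E\cap\operatorname{Ran}F\neq\{0\}$, where $F$ is the analogous projection for $B^{p}$. Nothing established so far prevents the top spectral subspaces of $A^{p}$ and $B^{p}$ from being mutually orthogonal, in which case the product need not attain its norm at all, so a genuine extra ingredient is needed to rescue this point: for instance that $A^{p}$ or $B^{p}$ is compact, which both legitimizes the weak-limit argument and forces attainment of the relevant spectral norms, or that $A$ and $B$ share a common top spectral subspace. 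I would therefore carry the argument through under such a hypothesis, this weak-compactness step being where the reflexivity/separability assumptions genuinely enter.
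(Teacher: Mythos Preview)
Your normality argument for $(AB)^{p}$ coincides with the paper's: both use $AB=BA$ to write $(AB)^{p}=A^{p}B^{p}$, observe via Proposition~\ref{p normattainable} that $A^{p}$ and $B^{p}$ are commuting normal operators, and (implicitly or explicitly) invoke Fuglede to conclude that $A^{p}B^{p}$ is normal. The divergence is in the norm-attainability step. The paper does not build a norm-attaining vector at all; it simply asserts, by citation to \cite{Oke3}, that the product $A^{p}B^{p}$ of commuting normal operators is norm-attainable, and declares the proof complete. Your spectral-projection construction and weak-limit idea are more explicit than anything in the paper's argument.

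The obstacle you isolate is not a technicality but a genuine gap that the paper's citation conceals rather than resolves. On $H=\ell^{2}$ take the diagonal (hence normal, hence $p$-normal, commuting) operators $A=\mathrm{diag}\bigl(1,\tfrac12,\tfrac23,\tfrac34,\dots\bigr)$ and $B=\mathrm{diag}\bigl(\tfrac12,1,1,1,\dots\bigr)$. Each attains its norm $1$ (at $e_{1}$ and at $e_{2}$ respectively), so by Proposition~\ref{p normattainable} both lie in $NA_{p}(H)$ for every $p$; yet $AB=\mathrm{diag}\bigl(\tfrac12,\tfrac12,\tfrac23,\tfrac34,\dots\bigr)$ has $\|(AB)^{p}\|=1$ with no unit vector realising it. Thus your suspicion that the top spectral subspaces of $A^{p}$ and $B^{p}$ can be disjoint is exactly right, and no argument along the lines you sketch---nor any other---can close the gap without an additional hypothesis such as compactness or a shared maximal spectral subspace. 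Your proposal is therefore as complete as it can be: the missing ingredient is not in your reasoning but in the statement itself.
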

\begin{proof}
Suppose that  $A, B\in  NA_{p}(H)$. Since $A, B$ are commuting $p$-norm-attainable operators,then  $A^{p},\; B^{p}$ are commuting
normal operator. From \cite{Oke3}, $A^{p}B^{p}$ is a $p$-norm-attainable operator and hence norm-attainable. Lastly, $A^{p}B^{p} = (AB)^{p}= (BA)^{p}$
is normal and norm-attainable. Hence $AB$ is a $p$-norm-attainable operator.
\end{proof}
\begin{remark}
Not all $p$-norm-attainable operators are $p$-normal. In fact, The following example shows that sum of two commuting $p$-normal operators need not be $p$-normal.
\end{remark}

\begin{example}
Consider $A=\left[
           \begin{array}{cc}
             1 & 0 \\
             0 & 1 \\
           \end{array}
         \right]$
         and
         $B=\left[
           \begin{array}{cc}
             0 & 1 \\
             0 & 0 \\
           \end{array}
         \right]$
Clearly, $A$ and $B$ are
commuting $2$-normal operators. Now, $A+B=\left[
           \begin{array}{cc}
             1 & 1\\
             0 & 1 \\
           \end{array}
         \right]$ and
         $(A+B)^{2}=\left[
           \begin{array}{cc}
             1 & 2\\
             0 & 1 \\
           \end{array}
         \right]$is not normal. So $A+B$ is not $2$-normal. Recall that $B$ is  self-adjoint.
\end{example}
\section{Open questions}
In conclusion, we have  characterized norm-attainability for functionals in Banach spaces. Moreover, we have given a new notion of norm-attainability for power operators and also for elementary operators in normed spaces. The following two open questions arise naturally: \\
1. Let $H$ be a reflexive, dense, separable, infinite dimensional complex Hilbert space. Does there exist a bounded self-adjoint
operator $A : H \rightarrow H$ such that  $\|Ax_{0}\|<\|A\|,$ for all $x_{0}\in H$?\\
 2. When does $p$-norm-attainablity and $p$-normality coincide in general Banach spaces?

\end{document}